\documentclass[11pt,reqno]{amsproc}

\linespread{1.05}        

\usepackage[sc]{mathpazo}\renewcommand{\mathbf}{\mathbold}
\normalfont
\usepackage[T1]{fontenc}

\usepackage{misc/packages}
\newcommand{\A}{\mathbb{A}}

\newcommand{\C}{\mathbb{C}}
\newcommand{\Z}{\mathbb{Z}}
\newcommand{\N}{\mathbb{N}}
\newcommand{\Q}{\mathbb{Q}}
\newcommand{\R}{\mathbb{R}}

\newcommand{\bA}{\mathbb{A}}
\newcommand{\bB}{\mathbb{B}}

\newcommand{\bC}{\mathbb{C}}
\newcommand{\bM}{\mathbb{M}}

\newcommand{\bI}{\mathbb{I}}


\newcommand{\cA}{\mathcal{A}}

\newcommand{\cP}{\mathcal{P}}
\newcommand{\cB}{\mathcal{B}}
\newcommand{\cC}{\mathcal{C}}

\newcommand{\cM}{\mathcal{M}}

\newcommand{\apl}{\mathcal{A}_{PL}}
\newcommand{\cN}{\mathcal{N}}


\newcommand{\sTop}{\mathsf{Top}}

\newcommand{\sSet}{\mathsf{sSet}}
\newcommand{\op}{\mathsf{op}}

\newcommand{\Hom}{\mathsf{Hom}}

\newcommand{\sVec}{\mathsf{Vec}_\Q}
\newcommand{\Ker}{\mathrm{Ker} \,}
\newcommand{\im}{\mathrm{Im} \,}
\newcommand{\Coker}{\mathrm{Coker}\:}
\newcommand{\Coim}{\mathrm{Coim}\:}
\newcommand{\Cone}{\mathrm{Cone}}
\newcommand{\Ho}{\mathrm{Ho}}

\newcommand{\bS}{\mathbf{S}}

\newcommand{\mA}{\mathbbm{A}}




\newcommand{\id}{\mathsf{id}}
\newcommand{\Ch}{\mathsf{Ch}^*}
\newcommand{\CDGA}{\mathsf{CGDA}}

\newcommand{\Di}{\mathbb{D}}
\newcommand{\Fun}{\mathsf{Fun}}
\newcommand{\I}{\mathbb{I}}

\newcommand{\Sp}{\mathbb{S}}

\newcommand{\W}{\mathbb{W}}
\newcommand{\X}{\mathbb{X}}

\newcommand{\Y}{\mathbb{Y}}
\newcommand{\V}{\mathbb{V}}

\newcommand{\colim}{\mathsf{colim}}

\newcommand{\DDelta}{\mathbf{\Delta}}


\newcommand{\weak}{\overset{\sim}{\to}}

\newtheorem*{theorem*}{Theorem}
\newtheorem{theorem}{Theorem}[section]

\newtheorem{lemma}[theorem]{Lemma}
\newtheorem{corollary}[theorem]{Corollary}

\newtheorem{proposition}[theorem]{Proposition}

\theoremstyle{definition}
\newtheorem{definition}[theorem]{Definition}

\theoremstyle{remark}
\newtheorem{remark}[theorem]{Remark}
\newtheorem{notation}[theorem]{Notation}

\numberwithin{equation}{section}

\newcommand{\kathryn}[1]{{\color{teal}[[\textbf{Kathryn says: }#1]]}}

\title{Cell decompositions of persistent minimal models}
\date{\today}
\author{Kathryn Hess, Samuel Lavenir, Kelly Maggs}

\begin{document}

\maketitle
\begin{center}
\begin{small}
    \today
\end{small}
\end{center}

\maketitle

\begin{abstract}
    In this article we generalize the main structure theorems of rational homotopy theory to the persistent setting. Our main motivation is the computation of an explicit finite, cellular presentation of the persistent minimal model that completely characterizes the rational homotopy type of copersistent simply-connected spaces. We achieve this via an explicit construction of the minimal model of a tame persistent CDGA as an iterated sequence of cell attachments. As an application of our results, we construct an explicit decomposition of the rational Postnikov tower of simply-connected copersistent spaces in terms of a tower of persistent Eilenberg-Maclane intervals.
\end{abstract}

\tableofcontents

\section{Introduction}

Persistence theory studies how algebraic invariants vary along a parametrized family of objects in a category \cite{Oudot_2015,Carlsson05}. Parametrized objects are encoded as functors $F: \cP \to \cC$ from a poset $\cP$ into a category $\cC$. Post-composing with the functorial invariant of interest yields the associated persistent invariant. The simplest but most ubiquitous example are functors $[n]\to \mathsf{Vec}_{\mathbf{k}}$ from the standard poset $([n], \leq)$ over $[n] = \{ 0,1,\ldots, n\}$ to vector spaces $\mathsf{Vec}_\mathbf{k}$ over a field $\mathbf{k}$ which form the category of (finitely-indexed) \textit{persistence modules} and have been studied extensively. An interval module $\bI_{[s_\gamma,t_\gamma)}$ represents a functor that is constant at $\Q$ over the half-open interval $[s_\gamma,t_\gamma) \subset [n]$ and zero elsewhere. Finitely-indexed persistence modules admit an extremely useful decomposition, namely, the \textit{interval decomposition} $\V \cong \bigoplus_\gamma \bI_{[s_\gamma,t_\gamma)}$ \cite{Webb_1985,Carlsson05,chazal2016structure,Crawley-Boevey_2015}. In such cases, the persistent module is completely described (up to isomorphism) by the set of pairs $(s_\gamma, t_\gamma)$ of birth-times and death-times. 
 
 The goal of this article is to use the interval decomposition to generate novel structure theorems at the homotopical level. Given that persistence-related decompositions exist only over fields, rational homotopy theory is a natural candidate for studying persistent homotopical behaviour. In this paper, we thus focus on the persistent versions of model categories associated with  rational homotopy theory -- commutative differential graded algebras (CDGAs) and simplicial sets. Rather than direct sums, persistent CDGAs decompose into a minimal cell complex consisting of persistent cells and attaching maps, i.e., a persistent version of the classical Sullivan minimal model. Prior to this work, however, this construction has yet to be fully integrated with the results of persistence theory.

The main technical obstruction is that replacing a CDGA of polynomial differential forms on a space with a Sullivan minimal CDGA is \textit{homotopy functorial} (in the sense of Diagram \ref{homotopy-functoriality-sullivan-rep}) rather than functorial on the nose. This requires particular care in the persistent case as the structure maps between algebras of polynomial differential forms are part of the data. Indeed, we define a $k$-minimal model for a a persistent CDGA $\bA : [n] \to \CDGA$ to be a diagram
\begin{equation} \label{tame-minimal-model} \begin{tikzcd}[sep=scriptsize]
	\ldots && {\bM(r)} && {\bM(r+1)} && {\bM(r+2)} && \ldots \\
	\\
	\ldots && {\bA(r)} && {\bA(r+1)} && {\bA(r+2)} && \ldots
	\arrow[from=1-1, to=1-3]
	\arrow[from=1-3, to=1-5]
	\arrow["{m(r)}", from=1-3, to=3-3]
	\arrow[from=1-5, to=1-7]
	\arrow["{H(r)}", shorten <=13pt, shorten >=13pt, Rightarrow, from=1-5, to=3-3]
	\arrow["{m(r+1)}", from=1-5, to=3-5]
	\arrow[from=1-7, to=1-9]
	\arrow["{H(r+1)}", shorten <=14pt, shorten >=14pt, Rightarrow, from=1-7, to=3-5]
	\arrow["{m(r+2)}", from=1-7, to=3-7]
	\arrow[from=3-1, to=3-3]
	\arrow[from=3-3, to=3-5]
	\arrow[from=3-5, to=3-7]
	\arrow[from=3-7, to=3-9]
\end{tikzcd}\end{equation} such that each $m(r) : \bM(r) \to \A(r)$ is a $k$-minimal model, abbreviating such a diagram to the notation $\bM \to_2 \bA$. Explicit homotopies are part of the data, and need only be defined between neighbouring indices.

The main tool we introduce is an algebraic notion of persistent cell attachment (\ref{persistent-hirsch-extension}), which we use to formally adjoin free, graded-commutative intervals to persistent cochain complexes and commutative differential graded algebras (CDGAs) --- a procedure we call \textit{interval surgery}. Attachment of an interval $\bI_{[p,q)}^k$ in degree $k$ is performed via push outs 
\begin{equation}
    \begin{tikzcd}
        \Lambda \Sp^{k+1}_{[p,q)} \ar[r, "\tau"] \ar[d] & \bA \ar[d] \\
        \Lambda \Di^{k+1}_p \ar[r] & \bA \otimes_\varphi \Lambda \bI^k_{[p,q)} \arrow[ul, phantom, "\ulcorner", very near start]
    \end{tikzcd}
    \end{equation} with \textit{interval spheres} $\Lambda \Sp_{[p,q)}^k$ \textit{and disks} $\Lambda \Di^{k+1}_p$ providing an algebraic model of persistent spheres and disks. Our main result is the inductive construction of a persistent Sullivan minimal model via a series of explicit algebraic cell attachments in the category of persistent CDGAs.

\begin{theorem*}[Informal pCDGA Structure Theorem]\label{informal-structure-theorem}
    Every simply-connected persistent CDGA $\bA$ admits a persistent minimal model $\bM$ whose skeletal filtration is constructed as a sequence of degree $k$ persistent cell attachments
\begin{equation} \label{informal-cofiber-sequence-diagram} \begin{tikzcd}[sep=scriptsize]
	{\bigotimes_i \Lambda \Sp^{k+1}_{[p_i,q_i)}} && {\bM_{k-1}} && \bA \\
	\\
	{\bigotimes_i \Lambda \Di^{k+1}_{p_i}} && {\bM_k}
	\arrow[from=1-1, to=1-3]
	\arrow[hook, from=1-1, to=3-1]
	\arrow["m_{k-1}", from=1-3, to=1-5]
	\arrow["2"'{pos=0.8}, from=1-3, to=1-5]
	\arrow[hook, from=1-3, to=3-3]
	\arrow[from=3-1, to=3-3]
	\arrow["{m_k}"', from=3-3, to=1-5]
	\arrow["2"'{pos=0.9}, from=3-3, to=1-5]
\end{tikzcd}\end{equation}
\end{theorem*} \noindent This approach allows us to produce explicit formulae involving homotopies for our construction that we believe will form the basis of practical algorithms in future work. 

The above cell decomposition also has an interpretation at the level of spaces. In classical rational homotopy theory, the algebraic $k$-cells in the minimal model bijectively correspond to the generators of the $k$-th rational homotopy groups of the space and make explicit computations possible. The basis of the correspondance is that there exists a spatial realization functor $\langle - \rangle : \CDGA \to \sSet$ taking the skeletal filtration of the Sullivan minimal model to the rational Postnikov tower to the space it models. An application of our inductive construction of the persistent minimal model allows us to generalize this to the persistent level. 
\begin{theorem*}[Copersistent Postnikov tower decomposition] \label{persistent-eckmann-hilton}  Let $\X : [n]^{op} \to \sSet$ be a copersistent, simply-connected space $\X$ with minimal model $\bM \to_2 \mathcal{A}_{PL}(X)$. The image of the skeletal filtration \ref{informal-cofiber-sequence-diagram} of $\bM$ under spatial realization 
\[\begin{tikzcd}[sep=tiny]
	{\bigotimes \Lambda \mathbb{S}^{k+1}_{[s,t)}} && {\mathbb{M}_{k-1}} && {\langle \mathbb{M}_{k}\rangle} && {\prod PK(\bI_{[s,t)}^\vee,k+1)} \\
	&&& {\xrightarrow{\langle - \rangle}} \\
	{\bigotimes \Lambda \mathbb{D}^{k+1}_{[s,\infty)}} && {\mathbb{M}_{k}} && {\langle \mathbb{M}_{k-1}\rangle} && { \prod K(\bI_{[s,t)}^\vee,k+1)}
	\arrow["\otimes\tau", from=1-1, to=1-3]
	\arrow[tail, from=1-1, to=3-1]
	\arrow[tail, from=1-3, to=3-3]
	\arrow[from=1-5, to=1-7]
	\arrow[two heads, from=1-5, to=3-5]
	\arrow["\lrcorner"{anchor=center, pos=0.125}, draw=none, from=1-5, to=3-7]
	\arrow[two heads, from=1-7, to=3-7]
	\arrow[from=3-1, to=3-3]
	\arrow["\lrcorner"{anchor=center, pos=0.125, rotate=180}, draw=none, from=3-3, to=1-1]
	\arrow["{\prod\langle \tau \rangle}", from=3-5, to=3-7]
\end{tikzcd}\] recovers a copersistent Postnikov tower for $\X$.
\end{theorem*} More explicitly, the $K(\bI_{[s_\gamma,t_\gamma)}^\vee,k)$ are Eileberg-Maclane spaces $K(\Q,k)$ that are constant over the contravariant invariant interval $[s_\gamma,t_\gamma)$ and zero elsewhere. Given that $\lim \langle \bM_k \rangle $ is rationally homotopy equivalent to $\mathbb{X}$, the construction on the right is the 'copersistent Postnikov tower' for (the rational homotopy type of) $\X$. This reflects a basic correspondence:\\ 
$$\Big\{ \parbox{10em}{\centering attachments of\\ interval spheres} \Big\} \hspace{2em} \leftrightarrow \hspace{2em} \Big\{ \parbox{15em}{ \centering co-attachments of co-persistent\\ Eilenberg-Mac Lane intervals} \Big\} \vspace{1em}.$$ If all attaching maps in the persistent minimal model are trivial, then
$$\X \simeq \langle \bM \rangle \simeq \prod_k K( \pi_k^\Q(\X), k) \simeq \prod_\gamma K(\bI_{[s_\gamma,t_\gamma)},k_\gamma),$$ which we interpret as a `homotopical interval decomposition' of $\X$ at the level of copersistent spaces describing the persistent rational homotopy type.

The interval decomposition offers a useful presentation of the persistent homology type in terms of birth and death times of intervals. Analogously, the explicit construction of the minimal model produces a finite \textit{presentation} of the persistent rational homotopy type. Each interval $\bI^\vee_{[p,q)}$ in the persistent rational homotopy group provides a generator $\gamma_p \in \bM$ for the minimal model which is attached into the complex in two positions: (1) at birth-time, it is attached to a cocycle $d \gamma_p = v_p$ to define the coboundary and (2) at death-time, it is attached to either $0$ or a non-trivial product $$\gamma_q = u_q = u_q^{(1)} \wedge \ldots \wedge u_q^{(j)}$$ of lower-degree generators. In short, $\bM$ has the presentation:
\[\begin{tikzcd}[row sep=-0.5em]
	{\bM = p\Lambda(\mathscr{G} \mid \mathscr{R})} && {\Big\{ \, \, \gamma_p \in \ \bM^k(p)\, \, \Big\}} && {\Bigg\{ \, \, d\gamma_p = v_p, \gamma_q = u_q \, \, \Bigg\}} \\
	{\text{Presentation}} && {\text{Generators } \mathscr{G}} && {\text{Relations } \mathscr{R}}
\end{tikzcd}\]
To conclude the paper, we use this structure to describe the algebraic constraints on the possible rational homotopy types corresponding to a given persistent rational homotopy group arising from the naturality of coboundary operator.

\subsection*{Related work} 
It has earlier been observed that the persistent rational homotopy groups admit an interval decomposition under certain tameness assumptions \cite{Bubenik_de_Silva_Scott_2014}. The nascent field of (non-rational) persistent homotopy theory has been studied in \cite{Blumberg_Lesnick_2023,Lanari_Scoccola_2023,jardine2019data, jardine2020persistent}. In particular, the work of \cite{memoli2022persistent} seems to be the first to address persistent rational homotopy theory, doing so in the context of Rips complexes of metric spaces. Indeed, a persistent minimal model for rational homotopy theory was introduced in the concurrent work of \cite{zhou2023persistent}, which shares a similar definition. Here we do not address questions related to stability and interleaving, which are central to that work. The structural aspects of the persistent minimal model we study here should provide complementary insights, useful in addressing such questions.

\subsection*{Acknowledgements} The authors thank Jérôme Scherer for his helpful, detailed feedback on earlier versions of this manuscript and for valuable discussions about Postnikov towers and Eckmann-Hilton duality. K.M. was supported by the European Union’s Horizon 2020 Research and Innovation Program under Marie Skłodowska-Curie Grant Agreement No 859860. S.L. was supported by Swiss National Science Foundation, grant/award number: 200020 18858. 

\subsection{Notation and conventions}

\begin{itemize}
    \item We apply cohomological conventions, i.e., differentials increase degree.
    \item We use the word \textit{space} as a synonym for \textit{simplicial set}.
    \item All vector spaces are over the field $\Q$ of rationals.
    \item  We write $V^\vee$ for the dual vector space $\Hom_\Q(V, \Q)$.
    \item We denote by $\Ch_\Q$ the category of non-negatively graded cochain complexes over $\Q$ and by $\CDGA_\Q$ the category of commutative differential graded algebras (CDGAs) over $\Q$, i.e., the category of commutative monoids in $\Ch_\Q$.
    \item For $(C,d) \in \Ch_\Q$, the shifted complex $(C[n],d')$ is defined by $C[n]^k=C^{k-n}$ with differentials $d'_k=(-1)^n d_k$.
    \item A CDGA $\mathcal{A}$ is said to be connected if $H^0(\mathcal{A})=\Q$ and simply-connected if $H^1(\mathcal{A}) = 0$ additionally. 
    \item If $\mathcal{A} \in \CDGA_\Q$ is connected and therefore admits an augmentation $\cA \to \Q$, then 
    \begin{itemize} 
    \item $\overline{\cA}=\bigoplus_{n>0}\cA^n$ is the kernel of the augmentation;
    \item $Q(\cA)$ is the complex of \textit{indecomposables}
    $$Q(\cA)=\Coker(\overline{\cA}\otimes \overline{\cA} \to \overline{\cA}).$$
    \end{itemize}

    \item We use the term \textit{finite type} in three settings.
    \begin{itemize}
        \item A simplicial set is finite type if it has finitely many non-degenerate simplices.
        \item A cochain complex is finite type if it is degree-wise finite dimensional.
        \item A CDGA is finite type if its cohomology is degree-wise finite dimensional.
    \end{itemize}
\end{itemize}

\section{Background}

We first recall the basic properties of minimal models and their role in rational homotopy theory. The main reference for this section is \cite{griffithsmorgan}.

\subsection{Cones and homotopies}  \subsubsection{CDGA homotopies} Let $\Lambda(t,dt)$ be the free CDGA representing the polynomial differential forms on the interval, where the generator $t$ is of degree 0. This algebra admits two natural augmentations 
$ \varepsilon_0, \varepsilon_1 : \Lambda(t,dt) \to \Q $ defined by $\varepsilon_0(t)=0$ and $\varepsilon_1(t)=1$, corresponding to the restrictions to the start and end of the interval. Note that for degree reasons $\varepsilon_0(dt) = \varepsilon_1(dt)= 0$. A \textit{homotopy} between CDGA maps $f,g : \cA \to \cB$ is an algebra homomorphism $H : \cA \to \cB \otimes \Lambda(t,dt)$ such that $(id \cdot \epsilon_0) H = f$ and $(id \cdot \epsilon_0) H = g$. 

\subsubsection{Cochain complexes cones} Let $f : A \to B$ be a cochain map. We denote by $C_f$ the mapping cone of $f$, which has components $C_f^n = A^{n+1} \oplus B^{n}$ and differential $d:C_f^n \to C_f^{n+1}$ given by $$
d(a,b)=(da, f(a)-db).
$$
There is a natural map $B\to C_f$ given by $b\mapsto (0,-b)$. The sequence $A \xrightarrow{f} B \xrightarrow{} C_f$ induces a long exact sequence in cohomology $$
 \cdots \to H^{n-1}C_f \xrightarrow{} H^nA \xrightarrow{f^*} H^n B \xrightarrow{} H^nC_f \xrightarrow{} H^{n+1}A \to \cdots 
$$
in which the connecting homomorphisms are induced by the projection map $C_f \to A[1]$ given by $(a,b)\mapsto a$. Given a homotopy commutative square of cochain complexes $$
\begin{tikzcd} 
    A \ar[r, "u"]  \ar[d, "f"'] & A' \ar[dl, Rightarrow, "h", shorten = 1.0ex] \ar[d, "f'"]  \\ 
    B \ar[r, "v"'] & B'  
\end{tikzcd} 
$$
there is an induced cochain map on mapping cones $\varphi:C_f \to C_{f'}$ given by the formula $$
\varphi(a,b)=(u(a), v(b)+h(a)).
$$
\subsubsection{Integrating homotopies} A homotopy in CDGA can be integrated fiber-wise to a homotopy in $\Ch(\Q)$. The assignments
$$
t^k\mapsto 0
\:\:\:\:\:\:\:\:\:\:\:\:\:\:\:
t^kdt\mapsto \frac{1}{k+1}t^{k+1}
\:\:\:\:\:\:\:\:\:\:\:\:\:\:\:
\forall k \geq 0
$$
define a linear map $\int_0^t:\Lambda(t,dt)\to \Lambda(t,dt)$ of degree $-1$. We denote by $\int_0^1$ the composite $\varepsilon_1 \circ \int_0^t:\Lambda(t,dt) \to \Lambda(t,dt) \to \Q$. If $H:f\sim g$ is a homotopy between CDGA homomorphisms $f,g: \cA \to \cB$, the composite $$
 \cA \xrightarrow{H} \cB \otimes \Lambda(t,dt) \xrightarrow{\id \otimes \int_0^1} \cB \otimes \Q \cong \cB
$$
is a linear map $\int_0^1H:\cA \to \cB$ of degree $-1$. A straightforward calculation shows that it satisfies the identity
$$
d \int_0^1 H(a) + \int_0^1 H(da) = g(a) - f(a)
$$
for every $a\in \cA$. In other words, $\int_0^1H$ is a cochain homotopy between the maps $f$ and $g$. We say that $\int_0^1 H$ is the cochain homotopy \textit{underlying} the CDGA homotopy $H$.

\begin{remark} \label{cochain-cone} 
    We will often speak of the mapping cone $C_f$ of a CDGA homomorphism $f:\cA \to \cB$. By this we mean the mapping cone of the \textit{underlying cochain map}. In this situation, it should be noted that $C_f$ \textit{does not} carry any natural algebra structure. In particular, the notation $C_f$ should not be confused with the mapping cone in CDGA, which we will never use in this paper. In a similar spirit, when facing a homotopy commutative square of CDGAs $$
    \begin{tikzcd} 
        \cA \ar[r, "u"]  \ar[d, "f"'] & \cA' \ar[dl, Rightarrow, "H", shorten = 1.0ex] \ar[d, "f'"]  \\ 
        \cB \ar[r, "v"'] & \cB'  
    \end{tikzcd} 
    $$
    the induced map on cones $\varphi:C_f\to C_{f'}$ is merely a cochain map, and is given by the formula $$
    \varphi(a,b)=\big(\:u(a) \:,\: v(b) + \int_0^1H(a) \:\big)
    $$
    where we have used the \textit{cochain homotopy} $\int_0^1H$ which underlies the CDGA homotopy $H$.

\subsection{Minimal models} In this section we briefly review the theory of Sullivan minimal models.

\subsubsection{Cochain spheres and disks} Each vector space $V$ has associated spheres $S^k(V)$ and disks $D^k(V)$ for every integer $k\geq 0$. The complex $S^k(V)$ consists of a copy of $V$ in degree $k$. Similarly, the complex $D^k(V)$ consists of a copy of $V$ in each of degrees $(k-1)$ and $k$, where the only non zero differential is the identity. When $k=0$, we adopt the convention that $D^0(V)=0$. We also write $S^k=S^k(\Q)$ and $D^k=D^k(\Q)$. Given a graded $\Q$-vector space $V=\bigoplus_{k\geq 0} V_k$, we define an associated sphere $S(V)$ and disk $D(V)$. These are cochain complexes are defined by $$
S(V)=\bigoplus_{k\geq 0} S^k(V_k) 
\:\:\:\:\:\:\:,\:\:\:\:\:\:\:
D(V)=\bigoplus_{k> 0} D^k(V_k).
$$
Note that there are evident inclusions $S(V) \subseteq D(V)$ and that $D(V)/S(V)=S(V[-1])=S(V)[-1]$ for any choice of graded vector space $V$.

\subsubsection{Hirsch extensions} Cell attachments along generating cofibrations $\Lambda S^k \to \Lambda D^k$ in CDGA and their tensor products play an important role in rational homotopy theory. A \textit{Hirsch extension} of a CDGA $\cA$ is an algebra homomorphism $\cA \to \cA\langle V \rangle$ that fits into a pushout square of CGDA maps 
    $$
    \begin{tikzcd}
        \Lambda S(V) \ar[r]\ar[d] & \cA \ar[d] \\
        \Lambda D(V) \ar[r] & \cA \langle V \rangle \arrow[ul, phantom, "\ulcorner", very near start]
    \end{tikzcd}
    $$
    for some choice of graded $\Q$-vector space $V$, and is \textit{of degree} $k$ when $V$ is concentrated in degree $(k+1)$. A Hirsch extension $\cA \subseteq \cA \langle V \rangle$ is determined (up to isomorphism) by the choice of a graded vector space $V$ and a graded linear map $d:V[1]\to \cA$. Such a map prescribes the differentials of the formal elements $v \in \cA \langle V \rangle$.

\subsubsection{Minimal algebras} A \textit{Sullivan algebra} an $\Lambda I$-cell complex, or equivalently, a CDGA that is realized as a transfinite colimit of Hirsch extensions over the base field $\Q$. A \textit{minimal Sullivan algebra} (or simply \textit{minimal algebra}) is a CDGA $\cM$ that admits an exhaustive \textit{skeletal filtration} $$
    \Q=\cM_0 \subseteq \cM_1 \subseteq \cM_2 \subseteq \cM_3 \subseteq \cdots \subseteq \bigcup_{k\geq 0} \cM_k=\cM
    $$
    for which each $\cM_{k-1} \subseteq \cM_{k}$ is a Hirsch extension \textit{of degree} $k$. 
    When the filtration above terminates at level $k$, (i.e., when $\cM_k=\cM_i$ for all $i>k$) we say that $\cM$ is a $k$\textit{-minimal algebra}. A $k$-minimal algebra is thus generated by its elements of degree at most $ k$. If $\cA$ is a CDGA, a \textit{minimal model} of $\cA$ is a quasi-isomorphism $\cM \weak \cA$, where $\cM$ is a minimal algebra. If $k\geq 0$ is an integer, a $k$-minimal model of $\cA$ is a $(k+1)$-connected map $\cM \to \cA$ from a $k$-minimal algebra.

\subsubsection{Approximation telescope} Sullivan's original conception \cite{sullivan} of minimal models was to construct the smallest possible CDGA approximating the cohomology of CDGA $\mathcal{A}$ degree-by-degree. The $k$-th approximation error of a $(k-1)$-minimal model $m : \mathcal{M} \to \cA$ is measured by $k$-th cohomology $H^k(C_m)$ of the (cochain) mapping cone. The defect is removed at the CDGA level by performing Hirsch extensions
\begin{equation} \begin{tikzcd}[sep = small] \label{pointwise_induction_lemma}
	{\Lambda S^{k+1}(H^k C_m)} && {\mathcal{M}} && {\mathcal{A}} \\
	\\
	{\Lambda D^{k+1}(H^kC_m)} && {\overline{\mathcal{M}}}
	\arrow[from=1-1, to=1-3]
	\arrow[tail, from=1-1, to=3-1]
	\arrow["m", from=1-3, to=1-5]
	\arrow[from=1-3, to=1-5]
	\arrow[tail, from=1-3, to=3-3]
	\arrow[from=3-1, to=3-3]
	\arrow["\lrcorner"{anchor=center, pos=0.125, rotate=180}, draw=none, from=3-3, to=1-1]
	\arrow["{\overline{m}}"', from=3-3, to=1-5]
	\arrow[from=3-3, to=1-5]
\end{tikzcd} \end{equation} to produce a $k$-minimal model $\overline{m} : \overline{\cM} \to \cA$. The attaching maps and extension $\overline{m}$ are determined by cocycle representatives in the cone $(v, a) \in Z^kC_m$ where $d \alpha = v$ and $\overline{m}(\alpha) = a$ for the class $\alpha = [(v,a)] \in H^k C_m$. When $\mathcal{A}$ is simply-connected, the colimit of this process produces a minimal model of $\mathcal{A}$ \cite[10.3]{griffithsmorgan} that is unique up isomorphism.

\subsubsection{Homotopy functorialty} Minimality comes at the expense of functoriality, which complicates matters in the persistent setting. Namely, for any map of simply-connected CDGAs $f: \cA \to \cB$ with minimal models $m : \cM \to \cA$ and $n: \cN \to \cB$, there exists a map $\tilde{f} :  \cM \to \cN$ fitting into a homotopy commutative square of CDGAs
\begin{equation} \label{homotopy-functoriality-sullivan-rep}
\begin{tikzcd}[row sep = 3em, column sep = 3em]
    \cM \ar[r, "\tilde{f}"]  \ar[d, "m"'] & \cN \ar[dl, Rightarrow, "H"] \ar[d, "n"]  \\ 
    \cA \ar[r, "f"'] & \cB
\end{tikzcd} 
\end{equation}
where any two choices of $\tilde{f}$ are homotopic \cite[11.5]{griffithsmorgan}. The map $\tilde{f}$ is called a \textit{Sullivan representative} of $f$. Note that it not always possible to make such a square strictly commutative.

\subsection{Rational homotopy theory} CDGAs and minimal models are the basic algebraic tools to perform calculations in rational homotopy theory. We briefly review the literature, focusing on the results we will generalize to the persistent setting.  
\subsubsection{The key adjunction} Let $\Delta$ be the simplex category. There is a simplicial object $\Omega_\bullet : \Delta^{op} \to \CDGA_\Q$ that associates to each $[n]$ the CDGA 
$$\Omega_n = \Lambda(x_1,dx_1, \cdots, x_n, dx_n)$$ of polynomial differential forms with rational coefficients over the $n$-simplex with degrees $\lvert x_i \rvert = 0$ for all $i$. Kan extension of this simplicial object over the Yoneda embedding yields an adjunction
\[\begin{tikzcd}
            \langle - \rangle : \CDGA_\Q \arrow[r, shift left=1ex, ""{name=G}] & \sSet^\op : \apl \arrow[l, shift left=.5ex, ""{name=F}]
            \arrow[phantom, from=F, to=G, , "\scriptscriptstyle\boldsymbol{\bot}"]
\end{tikzcd}.\] The functor $\mathcal{A}_{PL}$ maps a simplicial set $X$ to that CDGA of polynomial differential forms. The \textit{spatial realization} functor $\langle - \rangle$ sends a CDGA $\mathcal{A}$ to the simplicial set of cosingular simplices $\langle \mathcal{A} \rangle = \text{Map}(\mathcal{A}, \Omega_\bullet)$.
\subsubsection{Postnikov towers} Rational homotopy groups of simply connected spaces can be computed via minimal CDGA models of their rational PL differential forms. For a simply-connected simplicial set $X$ with minimal model $m : \mathcal{M} \xrightarrow{\sim} \apl(X)$, the spatial realization functor
\begin{equation} \label{classical-postnikov} \begin{tikzcd}[sep = tiny]
	{\bigotimes \Lambda S^{k+1}} && {\mathcal{M}_{k-1}} && {\langle \mathcal{M}_k\rangle} && {PK(\pi_k^\Q(X), k+1)} \\
	&&& {\xrightarrow{\langle - \rangle}} \\
	{\bigotimes \Lambda D^{k+1}} && {\mathcal{M}_k} && {\langle \mathcal{M}_{k-1}\rangle} && {K(\pi_k^\Q(X), k+1)}
	\arrow["{\otimes \tau}", from=1-1, to=1-3]
	\arrow[tail, from=1-1, to=3-1]
	\arrow[tail, from=1-3, to=3-3]
	\arrow[from=1-5, to=1-7]
	\arrow[two heads, from=1-5, to=3-5]
	\arrow["\lrcorner"{anchor=center, pos=0.125}, draw=none, from=1-5, to=3-7]
	\arrow[two heads, from=1-7, to=3-7]
	\arrow[from=3-1, to=3-3]
	\arrow["\lrcorner"{anchor=center, pos=0.125, rotate=180}, draw=none, from=3-3, to=1-1]
	\arrow["{\prod \langle \tau \rangle}", from=3-5, to=3-7]
\end{tikzcd} \end{equation}
transforms cell attachments in the minimal model into rational principal fibrations in the rational Postnikov tower \cite[Thm. 3.3]{Deligne1975}. This implies we have a correspondence
$$\Big\{ \parbox{7em}{\centering $k$-cells of $\mathcal{M}$} \Big\} \hspace{2em} \longleftrightarrow \hspace{2em} \Big\{ \parbox{10em}{ \centering generators of $\pi_k^\Q(X)$} \Big\} \vspace{1em}.$$ where the attaching maps in the minimal model correspond to components of the rational classfying map, completely determining the rational homotopy type of $X$.

\subsection{Persistence theory.} For a fixed integer $[n]$ let $[n]$ be the category associated to the poset $\{0,1,2, \ldots, n\}$. The category $p\cC$ of \textit{persistent objects} in a category $\cC$ is the functor category $\Fun([n], \cC)$. If letters $X,Y,Z, \cdots$ are used for the objects of the category $\cC$, we use corresponding boldface symbols $\X,\Y,\Z, \cdots $ to name the objects of $p\cC$. Dually, we label the functor category $\Fun([n]^{op}, \cC)$ of \textit{copersistent objects }as $p^*\cC$, where $[n]^{op}$ is the dual poset to $[n]$.

Let $\mathsf{Vec}_\mathbf{k}$ be the category of vector spaces over a field $\mathbf{k}$. A \textit{persistence module} is a functor $\mathbb{V} : [n] \to \mathsf{Vec}_\mathbf{k}$, or equivalently, a persistent object in $\mathsf{Vec}_\mathbf{k}$. Since we are primarily concerned with rational homotopy theory, we will work over $\mathbf{k} = \Q$ from henceforth.

\begin{definition}
    Let $0\leq s<t\leq n+1$. The \textit{interval module} $\bI_{[s,t)} : [n] \to \mathsf{Vec}_\Q$ is the persistence module defined by
    $$
    \bI_{[s,t)}(i) = \begin{cases} \Q & \text{ if } s\leq i < t \\ 0 & \text{ else} \end{cases}
    $$
    and, at the level of maps, 
    $$\bI_{[s,t)}(i\leq j) = \begin{cases} \text{id}_\Q & s\leq i \leq j < t\\ 0 \text{otherwise} \end{cases}$$
\end{definition}

Direct sums of persistence modules are constructed degree-wise on both objects and morphisms. 
Interval modules are the atomic units into which all persistence modules decompose, as stated below.

\begin{theorem}\cite[Thm. 1]{Webb_1985} \label{interval-decomposition}
    If $\V : [n] \to \mathsf{Vec}_\Q$ is a persistence module then there exists a set of intervals $[s_\gamma, t_\gamma) \subseteq [0,\infty)$ such that $\V$ splits as a direct sum of interval modules
    $$\V \cong \bigoplus_{\gamma \in \Gamma} \bI_{[s_\gamma,t_\gamma)}.$$
    Moreover, this decomposition is unique up to permutation of summands.
\end{theorem}

\begin{remark}
    There are a number of analogous theorems in the TDA literature \cite{Carlsson05,chazal2016structure, Crawley-Boevey_2015}. We refer to this one since it most explicitly emphasizes point-wise vector spaces of arbitrary dimension.
\end{remark}

\section{Minimal models of maps}

The classical procedure for constructing Sullivan representatives of maps is to invoke an abstract lifting property. The point of this section is to produce explicit formulae for the Sullivan representative. This entails the inductive construction of an approximation telescope over a map, without resorting to lifting properties.

\subsection{Basic definitions for maps} \label{MMmapssection} We begin by laying out the basic definitions required for minimal models of maps. Let $f:\cA \to \cB$ be a map of simply-connected CDGAs. A \textit{$k$-minimal model} for $f$ is a diagram
\begin{equation} \begin{tikzcd}[row sep=tiny, column sep = small] \label{k-sullivanrep-square} 
	{\mathcal{M}} && {\mathcal{N}} \\
	&&& {\text{in } \CDGA_\Q} \\
	{\mathcal{A}} && {\mathcal{B}}
	\arrow["{f'}", from=1-1, to=1-3]
	\arrow["{ m}"', from=1-1, to=3-1] 
	\arrow["H", shorten <=6pt, shorten >=6pt, Rightarrow, from=1-3, to=3-1]
	\arrow["n", from=1-3, to=3-3]
	\arrow["f"', from=3-1, to=3-3]
\end{tikzcd} \end{equation} that commutes up to a specified homotopy $H$, and for which $m$ and $n$ are $k$-minimal models of $\cA$ and $\cB$ respectively. Integration  yields a homotopy commutative square and induced cone map 
\[\begin{tikzcd}[column sep = scriptsize]
	{\mathcal{M}} && {\mathcal{N}} \\
	\\
	{\mathcal{A}} && {\mathcal{B}}
	\arrow["{f'}", from=1-1, to=1-3]
	\arrow["{ m}"', from=1-1, to=3-1]
	\arrow["{\int_0^1 H}", shorten <=6pt, shorten >=6pt, Rightarrow, from=1-3, to=3-1]
	\arrow["n", from=1-3, to=3-3]
	\arrow["f"', from=3-1, to=3-3]
\end{tikzcd} \hspace{4em}
\varphi_{H} = \begin{bmatrix} f' & 0 \\ \int_0^1 H & f \end{bmatrix} : C_{m} \to C_{n}.\hspace{4em} \text{in } \mathsf{Ch}_\Q^*.\] The mapping cones $C_m, C_n$ are the homotopy cofibers of $m,n$ in the category $\Ch_\Q$, carry no natural algebra structure, and have trivial cohomology up-to degree $k$.

\subsubsection{Hirsch extensions} \label{hirschremark} A \textit{Hirsch extension} of a CDGA map $f : \mathcal{A} \to \mathcal{B}$ is a commutative square 
$$
        \begin{tikzcd}
            \cA \ar[d, hook]\ar[r, "f"] & \cB \ar[d, hook] \\
            \cA\langle V \rangle \ar[r, "\overline{f}"'] & \cB\langle W \rangle.
        \end{tikzcd}
        $$ in which $\cA \subseteq \cA \langle V \rangle$ and $\cB \subseteq \cB \langle W \rangle$ are Hirsch extensions of $\cA$ and $\cB$ along graded vector spaces $V$ and $W$. A Hirsch extension of $f$ is said to be \textit{of degree} $k$ if both the extensions $\cA \subseteq \cA \langle V \rangle$ and $\cB \subseteq \cB \langle W \rangle$ are of degree $k$. A Hirsch extension of $f$ is uniquely determined by the data of graded vector spaces $V,W$ and the choice of graded linear maps 
    $$
    d:V[1]\to \cA
    \:\:\:\:\:\:\:\:\:\:\:\:\:\:\:\:\:\:\:\:\:\:\:\:\:\:\:\:\:\:\:\:\ 
    d:W[1]\to \cB \:\:\:\:\:\:\:\:\:\:\:\:\:\:\:\:\:\:\:\:\:\:\:\:\:\:\:\:\:\:\:\:\ 
    \overline{f}:V \to \cB\langle W \rangle
    $$
    satisfying the compatibility condition $d\overline{f}(v)=f(dv)$ for each $v\in V$. In other words, the following diagram of cochain complexes should commute 
    $$
    \begin{tikzcd}
        {V[1]} \ar[r, "{\overline{f}[1]}"]\ar[d, "d"'] & {\cB\langle W[1] \rangle } \ar[d, "d"] \\
        {\cA} \ar[r, "{f}"] & {\cB}.
    \end{tikzcd}
    $$
\subsection{Homotopy-commutative telescopes} We continue with a technical lemma concerning homotopy-commutative telescopes and their colimits. 

\begin{lemma}\label{towerlemma}
    Suppose given a tower of homotopy commutative squares of CDGAs
\[\begin{tikzcd}[row sep = tiny, column sep = tiny]
	{\cM_0} && {\cM_1} && {\cM_2} && \cdots \\
	& {\cN_0} & {{}} & {\cN_1} & {{}} & {\cN_2} && \cdots \\
	{\cA_0} & {{}} & {\cA_1} & {{}} & {\cA_2} & {{}} & \cdots \\
	& {\cB_0} && {\cB_1} && {\cB_2} && \cdots
	\arrow[from=1-1, to=2-2]
	\arrow[from=1-1, to=3-1]
	\arrow[from=2-2, to=4-2]
	\arrow[from=3-1, to=4-2]
	\arrow["{H_0}", Rightarrow, from=2-2, to=3-1]
	\arrow[from=1-1, to=1-3]
	\arrow[from=2-2, to=2-4]
	\arrow[from=4-2, to=4-4]
	\arrow[from=1-3, to=2-4]
	\arrow[from=2-4, to=4-4]
	\arrow[from=3-3, to=4-4]
	\arrow["{H_1}", Rightarrow, from=2-4, to=3-3]
	\arrow[from=1-3, to=1-5]
	\arrow[from=2-4, to=2-6]
	\arrow[from=4-4, to=4-6]
	\arrow[""{name=0, anchor=center, inner sep=0}, from=2-6, to=4-6]
	\arrow[from=1-5, to=2-6]
	\arrow[from=3-5, to=4-6]
	\arrow["{H_2}", Rightarrow, from=2-6, to=3-5]
	\arrow[from=1-5, to=1-7]
	\arrow[from=2-6, to=2-8]
	\arrow[from=4-6, to=4-8]
	\arrow[no head, from=1-3, to=2-3]
	\arrow[from=2-3, to=3-3]
	\arrow[no head, from=3-1, to=3-2]
	\arrow[from=3-2, to=3-3]
	\arrow[no head, from=3-3, to=3-4]
	\arrow[from=3-4, to=3-5]
	\arrow[from=3-6, to=3-7]
	\arrow[from=2-5, to=3-5]
	\arrow[no head, from=1-5, to=2-5]
	\arrow[shorten >=5pt, no head, from=3-5, to=0]
\end{tikzcd}\] 
    each of which commutes up to a specified homotopy $H_i$. Suppose moreover that each homotopy $H_{i+1}$ is an extension of $H_i$, in the sense that the following squares commute for each $i\geq 0$
    $$
    \begin{tikzcd}
        \cM_i \ar[r] \ar[d, "H_i"'] & \cM_{i+1} \ar[d, "H_{i+1}"] \\
        \cB_i \otimes \Lambda(t,dt) \ar[r] & \cB_{i+1} \otimes \Lambda(t,dt).
    \end{tikzcd}
    $$
    In this situation, the induced map $H=\colim_i\:H_i$ renders homotopy commutative the square of colimits 
    $$
    \begin{tikzcd}
            \colim_i\:\cM_i \ar[rr] \ar[d]   && \colim_i\:\cN_i \ar[dll, Rightarrow, "H", shorten = 1.0ex] \ar[d]\\
            \colim_i\:\cA_i \ar[rr] && \colim_i\:\cB_i .
    \end{tikzcd}
    $$
\end{lemma}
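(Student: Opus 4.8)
The plan is to reduce the statement to a single observation---that the endofunctor $-\otimes\Lambda(t,dt)$ of $\CDGA_\Q$ commutes with the sequential colimits in play---and then to apply the universal property of the colimit twice. First I would fix notation: write $\cM_\infty=\colim_i\cM_i$ and, likewise, $\cN_\infty,\cA_\infty,\cB_\infty$, with structure maps $\iota_i:\cM_i\to\cM_\infty$, $\iota_i:\cN_i\to\cN_\infty$, and so on, and let $m_\infty,n_\infty,f_\infty,f'_\infty$ denote the maps induced on colimits. The hypotheses say that the side faces of the displayed prism commute strictly, so $\{\cM_i\},\{\cN_i\},\{\cA_i\},\{\cB_i\}$ are sequential diagrams in $\CDGA_\Q$ and $f'_\bullet,m_\bullet,n_\bullet,f_\bullet$ are morphisms between them; in particular $f_\infty m_\infty$ and $n_\infty f'_\infty$ are well-defined CDGA maps $\cM_\infty\to\cB_\infty$, and the goal is to produce a homotopy between them.

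Second, I would establish the interchange of colimits with $-\otimes\Lambda(t,dt)$: that the canonical comparison
\[ \colim_i\bigl(\cB_i\otimes\Lambda(t,dt)\bigr)\ \longrightarrow\ \bigl(\colim_i\cB_i\bigr)\otimes\Lambda(t,dt) \]
is an isomorphism of CDGAs. One route is to note that the forgetful functor $\CDGA_\Q\to\Ch_\Q$ creates sequential (indeed filtered) colimits, the free-CDGA monad being finitary, while $-\otimes_\Q\Lambda(t,dt)$ preserves all colimits of cochain complexes; a routine compatibility check then upgrades the resulting isomorphism of complexes to one of CDGAs. Alternatively one uses that $\cB\otimes\Lambda(t,dt)$ is the coproduct $\cB\sqcup\Lambda(t,dt)$ in $\CDGA_\Q$ and that $-\sqcup\Lambda(t,dt)$ preserves connected colimits, hence sequential ones. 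This is the step to handle with a little care, and essentially the only non-formal ingredient, so I expect it to be the main (mild) obstacle: the argument genuinely uses that the indexing category is connected, since the colimit of a constant diagram is computed correctly only over a connected shape.

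Third, with the interchange available, I would observe that the compatibility condition imposed on the homotopies is precisely the statement that $H_\bullet=\{H_i:\cM_i\to\cB_i\otimes\Lambda(t,dt)\}$ is a morphism of sequential diagrams from $\{\cM_i\}$ to $\{\cB_i\otimes\Lambda(t,dt)\}$. Taking the colimit of this morphism and composing with the isomorphism above yields a CDGA homomorphism
\[ H_\infty:=\colim_i H_i:\ \cM_\infty\ \longrightarrow\ \colim_i\bigl(\cB_i\otimes\Lambda(t,dt)\bigr)\ \cong\ \cB_\infty\otimes\Lambda(t,dt), \]
characterised by $H_\infty\circ\iota_i=(\iota_i\otimes\id)\circ H_i$ for every $i\geq 0$; this is the map the statement calls $H=\colim_i H_i$. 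Finally I would verify the two endpoint identities $(\id\cdot\varepsilon_0)H_\infty=f_\infty m_\infty$ and $(\id\cdot\varepsilon_1)H_\infty=n_\infty f'_\infty$. Since $\varepsilon_0,\varepsilon_1:\Lambda(t,dt)\to\Q$ are fixed, the maps $\id\cdot\varepsilon_0$ and $\id\cdot\varepsilon_1$ are natural in their first argument, so $(\id\cdot\varepsilon_0)\circ(\iota_i\otimes\id)=\iota_i\circ(\id\cdot\varepsilon_0)$ as maps $\cB_i\otimes\Lambda(t,dt)\to\cB_\infty$, and similarly for $\varepsilon_1$. Precomposing $(\id\cdot\varepsilon_0)H_\infty$ with $\iota_i$, then using the characterisation of $H_\infty$ and the hypothesis $(\id\cdot\varepsilon_0)H_i=f_i m_i$, gives $(\id\cdot\varepsilon_0)H_\infty\circ\iota_i=f_\infty m_\infty\circ\iota_i$ for every $i$; since $\cM_\infty=\colim_i\cM_i$, the universal property forces $(\id\cdot\varepsilon_0)H_\infty=f_\infty m_\infty$, and symmetrically $(\id\cdot\varepsilon_1)H_\infty=n_\infty f'_\infty$. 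This exhibits $H_\infty$ as a homotopy making the colimit square commute. No deeper difficulty should arise: the whole argument is bookkeeping around the colimit-interchange isolated in the second step.
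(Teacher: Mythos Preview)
Your proposal is correct and follows essentially the same approach as the paper: both arguments hinge on the fact that $-\otimes\Lambda(t,dt)$ commutes with the sequential colimit (the paper phrases this as ``the tensor product of CDGAs commutes with colimits in both variables''), then verify the endpoint identities by precomposing with the structure maps $\cM_i\to\cM_\infty$ and invoking the universal property. Your write-up is more explicit about why the interchange holds and about the naturality of $\id\cdot\varepsilon_j$, but the logical skeleton is the same.
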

\begin{proof}
    Denote $\cA=\colim_i \:\cA_i$, and similarly for $\cB, \cM, \cN$. Since the tensor product of CDGAs commutes with colimits in both variables, $H$ indeed provides an algebra homomorphism $$
    \cM \longrightarrow \cB \otimes \Lambda(t,dt),
    $$ in other words a CDGA homotopy between maps $\cM\to \cB$. It therefore suffices to check that the two composites of $H$ with the augmentations $\varepsilon_0, \varepsilon_1:\Lambda(t,dt) \to \Q$ give the two appropriate composites on the square of colimits. This follows immediately from the fact that the diagram $$
    \begin{tikzcd}
        \cM_i \ar[r, "H_i"] \ar[d] & \cB_i\otimes \Lambda(t,dt) \ar[d] \ar[dr, "{\varepsilon_0, \varepsilon_1}"] & \\
        \cM \ar[r, "H"'] & \cB \otimes \Lambda(t,dt) \ar[r, "{\varepsilon_0, \varepsilon_1}"'] & \cB
    \end{tikzcd}
    $$
    commutes for each $i\geq 0$, where the vertical maps are induced by the canonical inclusions $\cM_i \to \cM$ and $\cB_i \to \cB$ into the colimits.
\end{proof}

\subsection{Approximation telescopes of maps} We now define a procedure for extending the $(k-1)$-minimal model of a map to a $k$-minimal model. Analogous to the classical case, we do this via attaching the cohomology of the mapping cone - albeit with the additional information of the structure map between the cohomology of the source and target cones to keep track of. 

\begin{lemma} \label{inductivestep}
    Let $f:\cA \to \cB$ be a map of CDGAs and $k\geq 2$ an integer. Suppose given a $(k-1)$-minimal model $g$ of $f$
    $$
    \begin{tikzcd}[column sep = scriptsize]
        \cM \ar[rr, "g"] \ar[d, "m"']   && \cN \ar[dll, Rightarrow, "H", shorten = 2.0ex] \ar[d, "n"]\\
        \cA \ar[rr, "f"']  && \cB.
    \end{tikzcd}
    $$
    and denote by $\varphi:C_m \to C_n$ the induced cochain map on the cones. There exists an extension to a $k$-minimal model 
    $$
    \begin{tikzcd}[column sep = small]
        \cM\langle H^kC_m \rangle \ar[rr, "\overline{g}"] \ar[d, "\overline{m}"']   && \cN\langle H^kC_n \rangle \ar[dll, Rightarrow, "\overline{H}", shorten = 2.0ex] \ar[d, "\overline{n}"]\\
        \cA \ar[rr, "f"']  && \cB.
    \end{tikzcd}
    $$
    for which the map induced by $\overline{g}$ on indecomposables of degree $k$ is $Q^k(\overline{g})=H^k\varphi$.
\end{lemma}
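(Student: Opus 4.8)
The plan is to realise the two classical degree-$k$ Hirsch extensions, on the source and on the target, \emph{simultaneously}, and to pin down the extension $\overline g$ of $g$ together with the extension $\overline H$ of the homotopy by explicit formulae, chosen so that the degree-$k$ indecomposable part of $\overline g$ is forced to be $H^k\varphi$; no lifting property is needed beyond the classical degree-by-degree construction $\eqref{pointwise_induction_lemma}$. Concretely, I would first fix linear sections $s_m\colon H^kC_m\to Z^kC_m$ and $s_n\colon H^kC_n\to Z^kC_n$ of the canonical surjections, and write $s_m(\alpha)=(v_\alpha,a_\alpha)\in\cM^{k+1}\oplus\cA^k$ and $s_n(\gamma)=(w_\gamma,b_\gamma)\in\cN^{k+1}\oplus\cB^k$. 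These determine Hirsch extensions of degree $k$: let $\cM\langle H^kC_m\rangle$ be the extension of $\cM$ with $d\alpha=v_\alpha$, and set $\overline m(\alpha)=a_\alpha$; likewise $\cN\langle H^kC_n\rangle$ with $d\gamma=w_\gamma$ and $\overline n(\gamma)=b_\gamma$. The cocycle identity $d_{C_m}(v_\alpha,a_\alpha)=0$ unwinds to $dv_\alpha=0$ and $da_\alpha=m(v_\alpha)$, so $\overline m$ (and similarly $\overline n$) is a well-defined CDGA map, and the assertion that $\overline m$ and $\overline n$ are $k$-minimal models of $\cA$ and $\cB$ is exactly the classical inductive step $\eqref{pointwise_induction_lemma}$ (cf. \cite[10.3]{griffithsmorgan}), which I would simply invoke.

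The heart of the argument is the construction of $\overline g$. Since $\varphi\colon C_m\to C_n$ is a cochain map and $s_m,s_n$ are sections, the linear map $\varphi\circ s_m-s_n\circ H^k\varphi\colon H^kC_m\to Z^kC_n$ becomes zero after projecting to $H^kC_n$, hence takes values in the coboundaries $B^kC_n$; working over $\Q$, I would lift it along the linear surjection $d_{C_n}\colon C_n^{k-1}\twoheadrightarrow B^kC_n$ to a linear map $\psi\colon H^kC_m\to C_n^{k-1}$, written $\psi(\alpha)=(u_\alpha,c_\alpha)\in\cN^k\oplus\cB^{k-1}$. Expanding the cone differential $d_{C_n}(x,y)=(dx,\,n(x)-dy)$ yields the two identities
\[
    du_\alpha=g(v_\alpha)-w_{H^k\varphi(\alpha)},\qquad
    n(u_\alpha)-dc_\alpha=f(a_\alpha)+\textstyle\int_0^1 H(v_\alpha)-b_{H^k\varphi(\alpha)}.
\]
Now define $\overline g$ to be the Hirsch extension of $g$ sending the new generator $\alpha$ to $u_\alpha$ plus the formal degree-$k$ generator of $\cN\langle H^kC_n\rangle$ attached to the class $H^k\varphi(\alpha)$. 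The compatibility condition $d\,\overline g(\alpha)=g(d\alpha)=g(v_\alpha)$ demanded of a Hirsch extension of a CDGA map is precisely the first identity above, since that generator has differential $w_{H^k\varphi(\alpha)}$. Finally, $Q^k(\cM)=Q^k(\cN)=0$ because $\cM,\cN$ are generated in degrees $<k$, so $u_\alpha$ is decomposable in $\cN\langle H^kC_n\rangle$ and contributes nothing to indecomposables; under the canonical identifications $Q^k\cM\langle H^kC_m\rangle\cong H^kC_m$ and $Q^k\cN\langle H^kC_n\rangle\cong H^kC_n$ this forces $Q^k(\overline g)=H^k\varphi$ on the nose.

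It remains to extend the homotopy. I would set $\overline H(\alpha)=f(a_\alpha)\otimes 1+\textstyle\int_0^t H(v_\alpha)+d(c_\alpha\otimes t)$ and check that this is a CDGA homotopy $f\overline m\simeq\overline n\,\overline g$ restricting to $H$ on $\cM$. Using $dv_\alpha=0$, $da_\alpha=m(v_\alpha)$, $\varepsilon_0H=fm$, and the identity $d\!\int_0^t\omega+\int_0^t d\omega=\omega-(\varepsilon_0\omega)\otimes 1$, the cochain-map condition collapses to $d\,\overline H(\alpha)=H(v_\alpha)=\overline H(d\alpha)$; the value at $\varepsilon_0$ is $f(a_\alpha)=f\overline m(\alpha)$ (the remaining terms vanish at $t=0$ because $\varepsilon_0(t)=\varepsilon_0(dt)=0$); and the value at $\varepsilon_1$ is $f(a_\alpha)+\int_0^1H(v_\alpha)+dc_\alpha$, which by the second identity equals $b_{H^k\varphi(\alpha)}+n(u_\alpha)=\overline n\,\overline g(\alpha)$. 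Assembling $\overline m,\overline n,\overline g,\overline H$ then gives the required $k$-minimal model of $f$ with $Q^k(\overline g)=H^k\varphi$.

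I expect the only real difficulty to be bookkeeping: keeping the conventions for the cone differential, the shift $C_f^n=A^{n+1}\oplus B^n$, the induced cone map $\varphi_H$, and the sign in $\int_0^t$ mutually consistent, and then grinding through the differential and two endpoint checks for $\overline H$ (where the stray $\pm c_\alpha\otimes dt$ term is killed by both augmentations). The single structural input beyond linear algebra is the classical fact that Hirsch-extending a $(k-1)$-minimal model along the cohomology of its cone yields a $k$-minimal model; granting that, the genuinely new content — the identity $Q^k(\overline g)=H^k\varphi$ — is automatic once one observes that $\cN^k$ is decomposable, which is where the hypothesis $k\ge 2$ enters.
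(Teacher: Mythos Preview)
Your proof is correct and follows essentially the same strategy as the paper: choose cocycle representatives on each side, exhibit the obstruction $\varphi\circ s_m - s_n\circ H^k\varphi$ as a coboundary in $C_n$, and use the resulting correction terms $(u_\alpha,c_\alpha)$ to define $\overline g$ and $\overline H$ by the same explicit formulae. The only organisational difference is that the paper first chooses bases adapted to the decomposition $\Coim(H^k\varphi)\oplus\Ker(H^k\varphi)\to\im(H^k\varphi)\oplus\Coker(H^k\varphi)$ and picks the section $s_n$ on $\im(H^k\varphi)$ to equal $\varphi\circ s_m$ there, which forces the correction $u_\alpha$ to vanish on the coimage and leaves a nontrivial $(x_j,y_j)$ only on the kernel; your basis-free version treats all generators uniformly and recovers the paper's formulae as the special case of a particular section. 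Both arguments use the same observation that $u_\alpha\in\cN^k$ is decomposable (since $\cN$ is generated in degrees $<k$) to conclude $Q^k(\overline g)=H^k\varphi$.
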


\begin{proof}
    Let us denote by $\psi:V\to W$ the linear map 
    $
    H^k\varphi : H^kC_m \to H^kC_n
    $
    induced by $\varphi$ on degree $k$ cohomology. We wish to define a Hirsch extension of $g$ of degree $k$
    $$
    \begin{tikzcd}
            \cM \ar[d, hook]\ar[r, "{g}"] & \cN \ar[d, hook] \\
            \cM\langle V \rangle \ar[r, "{\overline{g}}"'] & \cN\langle W \rangle.
    \end{tikzcd}
    $$
    with the property that $Q^k(\overline{g})=\psi$. To do so, we need only to specify graded linear maps
    $$
    V[1] \to \cM \:\:\:\:\:\:\:\:\:\:\:\:\:\:\:\:\:\:\:\:\:\:\:\:\:\:\:\:\:\:\:\:\ 
    W[1] \to \cN \:\:\:\:\:\:\:\:\:\:\:\:\:\:\:\:\:\:\:\:\:\:\:\:\:\:\:\:\:\:\:\:\:
    V \to \cN\langle W \rangle
    $$
    satisfying a suitable compatibility condition, as stated in \Cref{hirschremark}.
    
    We can choose bases $\{\epsilon_i,\alpha_j\}$ of $V$ and $\{\psi(\epsilon_i),\beta_\ell\}$ of $W$ that are adapted to the decomposition $$
    \Coim(\psi)\oplus \Ker(\psi) \xrightarrow{\psi} \im(\psi)\oplus \Coker(\psi),
    $$
    so that $\{\epsilon_i\}$ is a basis of $\Coim(\psi) \cong \im(\psi)$ and $\{\alpha_j\}, \{\beta_\ell\}$ are bases for $\Ker(\psi)$ and $\Coker(\psi)$ respectively. We now pick representatives for the cohomology classes $\epsilon_i, \alpha_j \in H^kC_m$ and $\beta_\ell\in H^kC_n$, which we denote $(v_i,a_i), (v'_j, a'_j) \in Z^kC_m$ and $(w_\ell, b_\ell)\in Z^kC_n$ respectively. Since they are cocycles in the respective cones, the following relations hold:
    $$
    \begin{aligned}
        dv_i&=0 \\
        da_i&=m(v_i)
    \end{aligned}
    \:\:\:\:\:\:\:\:\:\:\:\:\:\:\:\:\:\:\:\:\:\:\:\:\:\:\:\:\:\:\:\:\:
    \begin{aligned}
        dv'_j&=0 \\
        da'_j&=m(v'_j)
    \end{aligned}
    \:\:\:\:\:\:\:\:\:\:\:\:\:\:\:\:\:\:\:\:\:\:\:\:\:\:\:\:\:\:\:\:\:
    \begin{aligned}
        dw_\ell&=0 \\
        db_\ell&=n(w_\ell).
    \end{aligned}
    $$
    Moreover, since $\psi(\alpha_j)=0$, there are elements $(x_j, y_j)\in C_n$ with $$
    d(x_j, y_j)=\varphi(v_j', a_j')=\big( \: g(v'_j)\:,\: f(a'_j) + \int_0^1 H(v'_j) \: \big).
    $$ 
    Let the following assignments define the required linear graded maps: $$
    \begin{aligned}
        d:V[1] &\to \cM \\
        \epsilon_i &\mapsto v_i  \\  
        \alpha_j &\mapsto v'_j 
    \end{aligned}
    \:\:\:\:\:\:\:\:\:\:\:\:\:\:\:\:\:\:\:\:\:\:\:\:\:\:\:\:\:\:\:\:\:
    \begin{aligned}
        d:W[1] &\to \cN \\
        \psi(\epsilon_i) &\mapsto g(v_i) \\
        \beta_\ell &\mapsto w_\ell
    \end{aligned}
    \:\:\:\:\:\:\:\:\:\:\:\:\:\:\:\:\:\:\:\:\:\:\:\:\:\:\:\:\:\:\:\:\:
    \begin{aligned}
        \overline{g}:V &\to \cN\langle W \rangle \\
        \epsilon_i &\mapsto \psi(\epsilon_i) \\
        \alpha_j &\mapsto x_j.
    \end{aligned}
    $$
    These data satisfy the required compatibility condition of \Cref{hirschremark} because $d\overline{g}(\epsilon_i)=d\psi(\epsilon_i)=g(dv_i)$ and $d\overline{g}(\alpha_j)=dx_j=g(v'_j)$. 
    
    We have thus defined a Hirsch extension $\overline{g}$ of $g$. Note that since $x_j\in \cN^k$, it must be a non-trivial product of generators because $\cN$ is a free algebra on generators of degree $<k$. This ensures that $Q^k(\overline{g})(\alpha_j)=0$ and consequently $Q^k(\overline{g})=\psi$ as desired. 
    
    At this point the extension $\cM\langle V \rangle$ contains formal symbols $\epsilon_i, \alpha_j$ in bijection with a basis for $V=H^kC_m$, with the property that $d\epsilon_i = v_i$ and $d\alpha_j=v'_j$. Similarly $\cN\langle W \rangle$ contains formal symbols $\psi(\epsilon_i), \beta_\ell$ in bijection with a basis for $W=H^kC_n$, such that $d\psi(\epsilon_i)=g(v_i)$ and $d\beta_\ell=w_\ell$.

    We now proceed to extend the maps $m:\cM \to \cA$, $n:\cN \to \cB$ along the inclusions $\cM \subseteq \cM\langle V \rangle$, $\cN \subseteq \cN \langle W \rangle$. This is done by the following assignments $$
    \begin{aligned}
        V &\to \cA \\
        \epsilon_i &\mapsto a_i \\
        \alpha_j &\mapsto a'_j
    \end{aligned}
    \:\:\:\:\:\:\:\:\:\:\:\:\:\:\:\:\:\:\:\:\:\:\:\:\:\:\:\:\:\:\:\:\:
    \begin{aligned}
        W &\to \cB \\
        \psi(\epsilon_i) &\mapsto f(a_i) + \int_0^1H(v_i) \\
        \beta_\ell &\mapsto b_\ell.
    \end{aligned}
    $$
    It needs to be checked that these assignments yield well-defined cochain maps $\overline{m}:\cM\langle V \rangle \to \cA$ and $\overline{n}:\cN\langle W \rangle \to \cB$. This follows from the easily checked facts that $d\overline{m}(\epsilon_i)=m(v_i)$, $d\overline{m}(\alpha_j)=m(v'_j)$, $d\overline{n}(\beta_\ell)=n(w_\ell)$ and the following calculation :
    $$
    \begin{aligned}
         d\overline{n}(\psi(\epsilon_i))&=f(da_i)+d\int_0^1H(v_i) \\
        &=fm(v_i)+ ng(v_i)-fm(v_i) - \int_0^1 H(dv_i) \\
        &= ng(v_i)=\overline{n}(d\psi(\epsilon_i)).
    \end{aligned}
    $$
    We claim that $\overline{m}$ and $\overline{n}$ are $k$-minimal models of $\cA$ and $\cB$. To see this, note that the respective inclusions induce isomorphisms $H^*\cM \cong H^*\cM\langle V \rangle$ and $H^*\cN \cong H^*\cN\langle W \rangle$ for $*<k$, since $V$ and $W$ are concentrated in degree $k$. The classes $\epsilon_i, \alpha_j$ that formed a basis for $H^kC_m$ are now killed in $H^kC_{\overline{m}}$, since $d(\epsilon_i,0)=(v_i, a_i)$ and $d(\alpha_j,0)=(v'_j, a'_j)$ in $C_{\overline{m}}$. Similarly the classes $\psi(\epsilon_i), \beta_\ell$ are now killed in $H^kC_{\overline{n}}$ since $d(\beta_\ell, 0)=(w_\ell, b_\ell)$ and $$
    d(\psi(\epsilon_i),0)=(g(v_i), f(a_i)+\int_0^1 H(v_i))
    $$
    in $C_{\overline{n}}$. This shows that $H^kC_{\overline{m}}=H^kC_{\overline{n}}=0$. Since $\cM\langle V \rangle$ and $\cN \langle W \rangle$ are minimal algebras by construction, it follows that $\overline{m}, \overline{n}$ are $k$-minimal models.
    
    It remains to extend the homotopy $H:\cM \to \cB \otimes \Lambda(t,dt)$ along the inclusion $\cM \subseteq \cM\langle V \rangle$. To this end, let us define $\overline{H}$ by the following assigments $$
    \begin{aligned}
        \epsilon_i &\mapsto f(a_i) + \int_0^t H(v_i)  \\
        \alpha_j &\mapsto f(a'_j) + \int_0^t H(v'_j) + d(y_j\otimes t).
    \end{aligned}
    $$
    We need to check that $\overline{H}$ is indeed a cochain map. This follows from the following computation:
    $$
    \begin{aligned}
        d\overline{H}(\epsilon_i) &=fm(v_i) + H(v_i) - fm(v_i) - \int_0^t H(dv_i) \\
        &= H(v_i)=\overline{H}(d\epsilon_i)
    \end{aligned}
    $$
    where we used that $dv_i=0$. A similar calculation yields $d\overline{H}(\alpha_j)=H(v'_j)=\overline{H}(d\alpha_j)$. It is straightforward to check that $\overline{H}$ is a homotopy from $f\overline{m}$ to $ \overline{n}\overline{g}$, which finishes the proof.
\end{proof}

\begin{remark}
    Note that when defining the extensions $\cM \subseteq \cM \langle V \rangle$ and $\cN \subseteq \cN\langle W \rangle$, we have simultaneously ensured that both $H^kC_{\overline{m}}=H^kC_{\overline{n}}=0$, and that the cocycles 
    $$
    Q([v,a])=\big(\: g(v) \:,\: f(a)+ \int_0^1 H(v) \:\big) \:\: \in Z^kC_n
    $$
    are nullcohomologous for every class $[v,a]\in V=H^kC_m$. These cocycles are exactly the obstructions to extending $g$ and $H$ to a $k$-minimal model, once the maps $\overline{m}$ and $\overline{n}$ have been specified. More details on this perspective can be found in \cite[Theorem~11.5]{griffithsmorgan}.
\end{remark}

The new maps $\overline{g}, \overline{m},\overline{n}$ and $\overline{H}$ in our construction are explicit, and extended over the finitely many generating elements we have added in $V$ and $W$. In theory, this allows one to build up $k$-minimal models of maps inductively by hand. The theorem above can be read as an algorithmic procedure that  given the input of a $(k-1)$-minimal model of $f$, outputs a $k$-minimal model. This procedure can be iterated \textit{ad infinitum} and yields a minimal model of the map $f$, generalizing the classical approximation telescope construction.

\begin{theorem}[Approximation telescope of maps] \label{minimal_models_maps}
    Any homomorphism between simply-connected CDGAs admits a minimal model built via an approximation telescope.
\end{theorem}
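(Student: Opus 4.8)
The plan is to iterate the inductive step of \Cref{inductivestep}, starting from the tautological $1$-minimal model of $f$, and then pass to the colimit using \Cref{towerlemma}; this reproduces the classical approximation telescope (Diagram~\ref{pointwise_induction_lemma}) one map at a time.

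Since $\cA$ and $\cB$ are simply-connected, the unit maps $\Q \to \cA$ and $\Q \to \cB$ are $1$-minimal models: the algebra $\Q$ is $1$-minimal, and the cones $C_{\Q \to \cA}$, $C_{\Q \to \cB}$ have vanishing cohomology in degrees $\le 1$ (using $H^1\cA = H^1\cB = 0$). Together with the constant homotopy, the square with top edge $\id_\Q$ and bottom edge $f$ is therefore a $1$-minimal model of $f$. Applying \Cref{inductivestep} successively for $k = 2, 3, 4, \dots$ produces, for each $k$, a $k$-minimal model $g_k : \cM_k \to \cN_k$ of $f$ together with a homotopy $H_k : \cM_k \to \cB \otimes \Lambda(t,dt)$, in which $\cM_{k-1} \subseteq \cM_k$ and $\cN_{k-1} \subseteq \cN_k$ are the Hirsch extensions of degree $k$ attaching $H^k C_{m_{k-1}}$ and $H^k C_{n_{k-1}}$ respectively. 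By the explicit formulae in the proof of \Cref{inductivestep}, the data $g_k, m_k, n_k, H_k$ restrict on $\cM_{k-1}$ (resp.\ $\cN_{k-1}$) to $g_{k-1}, m_{k-1}, n_{k-1}, H_{k-1}$; in particular each $H_k$ is an extension of $H_{k-1}$ in the sense required by \Cref{towerlemma}. Hence the $k$-minimal models assemble into a tower of homotopy-commutative squares satisfying the hypotheses of \Cref{towerlemma}, with both bottom rows the constant diagram at $f : \cA \to \cB$.

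Set $\cM = \colim_k \cM_k$, $\cN = \colim_k \cN_k$, with induced maps $m : \cM \to \cA$, $n : \cN \to \cB$ and $g = \colim_k g_k$. By \Cref{towerlemma} the square
$$
\begin{tikzcd}[column sep = small]
\cM \ar[r, "g"] \ar[d, "m"'] & \cN \ar[dl, Rightarrow, "H", shorten = 1.0ex] \ar[d, "n"] \\
\cA \ar[r, "f"'] & \cB
\end{tikzcd}
$$
is homotopy commutative with $H = \colim_k H_k$. Each $\cM_{k-1} \subseteq \cM_k$ is a Hirsch extension of degree $k$, so $\{\cM_k\}$ is a skeletal filtration and $\cM$ is a minimal algebra, and likewise $\cN$. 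For the weak equivalences: the stage-$k$ extension adjoins only degree-$k$ generators, so $H^j C_{m_i}$ is independent of $i$ once $i \ge j$, while $H^j C_{m_k} = 0$ for every $k \ge j$ by \Cref{inductivestep}; since $C_m = \colim_k C_{m_k}$ and cohomology commutes with filtered colimits, $H^* C_m = \colim_k H^* C_{m_k} = 0$, so $m$ is a quasi-isomorphism, and the same for $n$. Thus the square above is a minimal model of $f$, obtained as an approximation telescope.

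The one genuinely delicate point --- and the reason the explicit construction of \Cref{inductivestep}, rather than an abstract lifting property, is needed --- is the compatibility of the homotopies along the tower: one must be able to choose the $H_k$ so that $H_k$ literally extends $H_{k-1}$. This is exactly what \Cref{inductivestep} provides, since the new generators of $\cM_k$ over $\cM_{k-1}$ sit in the fresh degree $k$, where $\overline{H}$ is prescribed by an explicit integral formula while being left untouched on $\cM_{k-1}$. Granting this, \Cref{towerlemma} handles the colimit formally, and the remaining verifications are the standard degreewise-stabilization arguments familiar from the classical minimal-model construction.
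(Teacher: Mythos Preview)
Your proof is correct and follows essentially the same approach as the paper: start from the trivial $1$-minimal model, iterate \Cref{inductivestep}, invoke \Cref{towerlemma} for the colimit homotopy, and verify minimality and quasi-isomorphism via the filtered-colimit argument on the cones. Your final paragraph usefully makes explicit the role of the concrete formulae in ensuring $H_k$ extends $H_{k-1}$, a point the paper leaves implicit.
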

\begin{proof}
    Let $f:\cA \to \cB$ denote such a CDGA map. Under the assumption that $\cA$ and $\cB$ are simply-connected, the trivial square $$
    \begin{tikzcd}
        \Q \ar[r, equals] \ar[d] & \Q \ar[d] \\
        \cA \ar[r, "f"] & \cB
    \end{tikzcd}
    $$
    is a 1-minimal model of $f$. \Cref{inductivestep} ensures that it extends to a 2-minimal model
    $$
    \begin{tikzcd}
        \Lambda H^2\cA \ar[r, "\overline{f}"] \ar[d] & \Lambda H^2 \cB \ar[dl, Rightarrow, "H", shorten = 2.0ex] \ar[d] \\
        \cA \ar[r, "f"'] & \cB
    \end{tikzcd}
    $$
    where $Q^2(\overline{f})\cong H^2f$. Iterating this procedure, we obtain a tower of models of $f$ in which the square labelled $k$ is a $k$-minimal model of $f$. \Cref{towerlemma} ensures that there is a homotopy filling the colimit square.
\[\begin{tikzcd}[column sep=tiny, row sep=small]
	\Q && {\cM_2} && {\cM_3} && {\cM_4} && \cdots && {\cM_\infty} \\
	& \Q & {{}} & {\cN_2} & {{}} & {\cN_3} & {{}} & {\cN_4} && \cdots && {\cN_\infty} \\
	\cA & {{}} & \cA & {{}} & \cA & {{}} & \cA & {{}} & \cdots && \cA \\
	& \cB && \cB && \cB && \cB && \cdots && \cB
	\arrow[from=1-1, to=2-2]
	\arrow[from=1-1, to=3-1]
	\arrow[from=2-2, to=4-2]
	\arrow["f"'{pos=0.3}, from=3-1, to=4-2]
	\arrow[hook, from=1-1, to=1-3]
	\arrow[hook, from=2-2, to=2-4]
	\arrow[Rightarrow, no head, from=4-2, to=4-4]
	\arrow[from=2-4, to=4-4]
	\arrow[from=1-3, to=2-4]
	\arrow[hook, from=1-3, to=1-5]
	\arrow[hook, from=2-4, to=2-6]
	\arrow[Rightarrow, no head, from=4-4, to=4-6]
	\arrow[from=2-6, to=4-6]
	\arrow[from=1-5, to=2-6]
	\arrow[hook, from=1-5, to=1-7]
	\arrow[""{name=0, anchor=center, inner sep=0}, hook, from=2-6, to=2-8]
	\arrow[Rightarrow, no head, from=4-6, to=4-8]
	\arrow[from=1-7, to=2-8]
	\arrow[from=2-8, to=4-8]
	\arrow[hook, from=1-7, to=1-9]
	\arrow[hook, from=2-8, to=2-10]
	\arrow[Rightarrow, no head, from=4-8, to=4-10]
	\arrow[Rightarrow, no head, from=3-1, to=3-2]
	\arrow[Rightarrow, no head, from=3-2, to=3-3]
	\arrow[Rightarrow, no head, from=3-3, to=3-4]
	\arrow[Rightarrow, no head, from=3-4, to=3-5]
	\arrow[Rightarrow, no head, from=3-5, to=3-6]
	\arrow[Rightarrow, no head, from=3-6, to=3-7]
	\arrow[Rightarrow, no head, from=3-7, to=3-8]
	\arrow[Rightarrow, no head, from=3-8, to=3-9]
	\arrow[no head, from=1-3, to=2-3]
	\arrow[from=2-3, to=3-3]
	\arrow[no head, from=1-5, to=2-5]
	\arrow[from=2-5, to=3-5]
	\arrow[from=2-7, to=3-7]
	\arrow["f"'{pos=0.3}, from=3-3, to=4-4]
	\arrow["f"'{pos=0.3}, from=3-5, to=4-6]
	\arrow["f"'{pos=0.3}, from=3-7, to=4-8]
	\arrow[Rightarrow, from=2-4, to=3-3]
	\arrow[Rightarrow, from=2-6, to=3-5]
	\arrow[Rightarrow, from=2-8, to=3-7]
	\arrow[from=1-11, to=2-12]
	\arrow[from=1-11, to=3-11]
	\arrow[from=3-11, to=4-12]
	\arrow[from=2-12, to=4-12]
	\arrow[Rightarrow, from=2-12, to=3-11]
	\arrow[shorten >=3pt, no head, from=1-7, to=0]
\end{tikzcd}\]
To finish the proof, we just need to check that the colimit square is a minimal model of $f$. The algebras $\cM_\infty, \cN_\infty$ are minimal since they are both unions of minimal sub-algebras $$
\cM_\infty=\bigcup_{k\geq 1} \cM_k \:\:\:\:\:\:\:\:\:\:\:\:\:\:\:\:\:\:\:\:
\cN_\infty=\bigcup_{k\geq 1} \cN_k.
$$
It thus suffices to check that the maps $\cM_\infty \to \cA$ and $\cN_\infty \to \cB$ are quasi-isomorphisms. We treat the case of $\cM_\infty$, that of $\cN_\infty$ being similar. Denoting $C_k=\Cone(\cM_k \to \cA)$, we can write 
$$
H^*C_\infty = \colim_k \: H^* C_k = 0
$$
since filtered colimits are exact in $\Ch_\Q$, and each $C_k$ is $k$-connected. It follows that $\cM_\infty \weak \cA$ is a quasi-isomorphism.
\end{proof}

\section{Persistent minimal models}

In this section, we generalize our construction for a single map to the persistent setting. We formulate this a series of push-outs along \textit{interval spheres}. We develop a procedure called \textit{interval surgery} -- the attachment of intervals into persistent CDGAs to correct errors in persistent cohomology approximation. 

\subsection{Interval spheres} 
A \textit{persistent cochain complex} is a persistent object in the category $\Ch_\Q$ of complexes, i.e., a functor $\mathbb{X} : [n] \to \mathsf{Ch}_\Q^*$. 
\begin{notation} Let $\X \in p\Ch_\Q$ be a persistent cochain complex and $k \in \N$.
\begin{itemize}
    \item $\X^k, Z\X^k \in p\sVec$ are the persistence modules of $k$-cochains and $k$-cocycles respectively.
    \item $H^k \X \in p\sVec$ is the persistent $k$-th cohomology of $\X$.
    \item If $x_s \in \X^k(s)$ then $x_t := \X^k(s \leq t)(x_s)$ for $s <t$.
\end{itemize}
\end{notation}

\subsubsection{Interval spheres and disks} In this section we introduce interval spheres and disks, as first defined in \cite{giunti_2021}. 
\begin{definition}[Interval spheres]\label{spheres} 
    Let $0 \leq s < t \leq n+1$. The \textit{interval sphere} in degree $k$ is the functor $\Sp_{[s,t)}^k : [n] \to \mathsf{Vec}_\Q$ defined by 
    \[
    \Sp_{[s,t)}^k(p) = \begin{cases}
        0 & p < s\\
        S^k & s \leq p < t\\
        D^k & t \leq p
    \end{cases}
    \] where all structure maps are identities except for 
    \[
     \Sp_{[s,t)}^k(s-1<s) : 0 \hookrightarrow S^k \; \; \; \; \; \; \text{and} \; \; \; \; \; \; \Sp_{[s,t)}^k(t-1<t) : S^k \hookrightarrow D^k.
    \] The \textit{persistent disk} $\Di_{s}^k$ is the special case where we set $s = t$.
\end{definition} 

Since any persistent module $\mathbb{V} : [n] \to \mathsf{Vec}_\Q$ admits an interval decomposition
    $
    \V = \bigoplus_{\alpha} \bI_{[s_\alpha,t_\alpha)}
    $ we can define the interval sphere over $\mathbb{V}$ as the direct sum
    $\Sp^k(\V)=\bigoplus_{\alpha} \Sp^k_{[s_\alpha, t_\alpha)}$ and similarly for persistent disks. This is well-defined up-to re-labelling, and there a natural inclusion $\Sp(\V)\subseteq \Di(\V)$ induced by those of \Cref{spheres}.

Post-composition with the free commutative graded algebra functor $\Lambda : \mathsf{Ch}_\Q^* \to \CDGA_\Q$ yields the CDGA interval spheres and disks
\[
\Lambda \Sp_{[s,t)}^k := \Lambda \circ \Sp_{[s,t)}^k \: \: \: \: \text{and} \: \: \: \Lambda \Di_{s}^k := \Lambda \circ \Di_{s}^k.
\] From \cite[4.5]{giunti_2021}, there is an isomorphism
$$\Hom(\Di_s^k, \X)\cong \X^{k-1}(s) \:\:\:\:\:\:\:\:\:\: \Hom(\Sp_{[s,t)}^k, \X) \cong Z\X^k(s)\times_{Z\X^k(t)}\X^{k-1}(t)
$$
valid for any persistent cochain complex $\X$ and for each $k\geq 1$. We can thus denote morphisms
\[\begin{tikzcd}
	{\Sp^{k+1}_{[s,t)}} && \bA && {\Di^{k+1}_s} && \bB
	\arrow["{(x_s,y_t)}", from=1-1, to=1-3]
	\arrow["{z_s}", from=1-5, to=1-7]
\end{tikzcd}\] by elements, where $dy_t = x_t \in \bA^{k+1}(t)$ and $z_s \in \bB^k(s)$. These observations hold equally well for CDGA interval spheres and disks. 

\subsection{Persistent Hirsch Extensions} Our goal is to generalise the connections among Hirsch extensions, minimal models and approximation telescopes to the persistent setting. To this end, we propose the following definition of a \textit{persistent} Hirsch extension. 

\begin{definition} \label{persistent-hirsch-extension}
    Let $\bA$ be a persistent CDGA, $\V$ a graded persistence module of finite type. A \textit{Hirsch extension} of $\bA$ by $\V$ is a homomorphism $\bA \to \bB$ of persistent CDGAs that fits into a pushout diagram
    \begin{center}
    \begin{tikzcd}
        \Lambda \Sp(\V) \ar[r] \ar[d] & \bA \ar[d] \\
        \Lambda \Di(\V) \ar[r] & \bB \arrow[ul, phantom, "\ulcorner", very near start]
    \end{tikzcd}
    \end{center}
    where the map $\Lambda \Sp(\V) \to \Lambda \Di(\V)$ is induced by the inclusion $\Sp(\V) \subseteq \Di(\V)$ of \Cref{spheres}. In this situation, we write $\bB=\bA \langle \V \rangle$, leaving implicit the data of the attaching map $\Lambda \Sp(\V)\to \bA$. We say that the Hirsch extension is \textit{of degree $k$} when $\V$ is concentrated in degree $k+1$.
\end{definition}

The following statement relates persistent Hirsch extensions to the usual notions of Hirsch extension of CDGAs. 

\begin{proposition}\label{pers_hirsch_structure}
    Let $\bA$ be a persistent CDGA and $\V$ a graded persistence module. Consider a persistent Hirsch extension $\bA \to \bA\langle \V \rangle$ of $\bA$ by $\V$. 
    \begin{enumerate}
        \item For each index $t$, the algebra homomorphism $\bA(t) \to \bA\langle \V \rangle (t)$ is a Hirsch extension of $\bA(t)$ by $\V(t)$.
        \item For all indices $s< t$, the homomorphism $\bA\langle \V \rangle(s<t)$ is a Hirsch extension of $\bA(s<t)$ by $\V(s) \to \V(t)$.
    \end{enumerate}
\end{proposition}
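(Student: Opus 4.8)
The plan is to reduce to the case of a single interval module and then argue levelwise, using that colimits in the functor category $p\CDGA_\Q = \Fun(\R_+,\CDGA_\Q)$ are computed pointwise, that the free functor $\Lambda$ preserves colimits, and that evaluation at an index $t$ commutes with $\Lambda$ (so $\Lambda\Sp(\V)(t)=\Lambda\big(\Sp(\V)(t)\big)$, and likewise for $\Di$).

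First I would record the reductions. Since $\V$ is tame and of finite type, \Cref{interval-decomposition} together with the remark following it decomposes $\V$ as a finite direct sum $\bigoplus_{i=1}^{n}\bI_{[s_i,t_i)}$ of interval modules, concentrated in a single cohomological degree $j$ in the graded case after splitting over degrees. As $\Sp(\V)=\bigoplus_i\Sp^{j}_{[s_i,t_i)}$ and $\Lambda$ carries direct sums of complexes to tensor products of CDGAs, the attaching map $\Lambda\Sp(\V)\to\bA$ is recovered from its restrictions to the tensor factors $\Lambda\Sp^{j}_{[s_i,t_i)}$, so the pushout defining $\bA\langle\V\rangle$ decomposes as an $n$-fold iterated pushout exhibiting $\bA\langle\V\rangle$ as a composite of persistent Hirsch extensions by the individual interval modules (using $\cA\langle V\rangle\langle W\rangle\cong\cA\langle V\oplus W\rangle$). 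Because both ``Hirsch extension of a CDGA'' and ``Hirsch extension of a CDGA map'' in the sense of \Cref{hirschremark} are routinely closed under composition, it suffices to prove (1) and (2) when $\V=\bI_{[s_0,t_0)}$ is a single interval module concentrated in a single degree $j$.

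For statement (1), pointwise formation of pushouts gives $\bA\langle\V\rangle(t)$ as the pushout in $\CDGA_\Q$ of $\Lambda\Di(\V)(t)\leftarrow\Lambda\Sp(\V)(t)\to\bA(t)$. By the defining Kan extensions of \Cref{spheres}, $\Sp^{j}_{[s_0,t_0)}(t)$ equals $0$ for $t<s_0$, equals $S^{j}$ for $s_0\le t<t_0$, and equals $D^{j}$ for $t\ge t_0$, while $\Di^{j}_{s_0}(t)$ equals $0$ for $t<s_0$ and $D^{j}$ for $t\ge s_0$. A three-way case split then finishes the argument: for $t<s_0$ the span is trivial and $\bA\langle\V\rangle(t)=\bA(t)$, a trivial Hirsch extension by $\V(t)=0$; for $s_0\le t<t_0$ the span is $\Lambda D^{j}\leftarrow\Lambda S^{j}\to\bA(t)$, which by definition exhibits $\bA(t)\to\bA\langle\V\rangle(t)$ as a Hirsch extension by $\V(t)=\Q$ in degree $j$; and for $t\ge t_0$ the left map $\Lambda\Sp(\V)(t)\to\Lambda\Di(\V)(t)$ is the identity of $\Lambda D^{j}$, so the pushout is $\bA(t)$ itself, once more a trivial Hirsch extension by $\V(t)=0$.

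For statement (2), restrict the pushout square of \Cref{persistent-hirsch-extension} along the inclusion $\{s<t\}\hookrightarrow\R_+$; restriction preserves colimits, so the result is a pushout square in the arrow category of $\CDGA_\Q$, which unfolds to a commuting cube whose outer square
\[\begin{tikzcd}[column sep = large]
	\bA(s) \ar[r, "{\bA(s<t)}"] \ar[d, hook] & \bA(t) \ar[d, hook] \\
	\bA\langle \V \rangle(s) \ar[r] & \bA\langle \V \rangle(t)
\end{tikzcd}\]
has, by part (1), vertical maps that are Hirsch extensions by $\V(s)$ and $\V(t)$; hence by \Cref{hirschremark} it is a Hirsch extension of the map $\bA(s<t)$, its defining data being the linear maps $d$ produced in part (1) together with $\overline{f}:=\bA\langle\V\rangle(s<t)|_{\V(s)}$, the compatibility $d\overline{f}(v)=\bA(s<t)(dv)$ being immediate since $dv\in\bA(s)$ and the square commutes. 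That it is a Hirsch extension ``by $\V(s)\to\V(t)$'', i.e.\ that $Q^{j}(\overline{f})=\V(s<t)$, is again read off the single-interval cases, since on the $S^{j}$-summand the structure map $\Sp^{j}_{[s_0,t_0)}(s<t)$ is exactly $\bI_{[s_0,t_0)}(s<t)$ — the identity when $s,t\in[s_0,t_0)$ and the zero map otherwise — which is precisely what the added generator records. The one genuinely delicate point (in both parts) is the regime $t\ge t_0$: there the interval sphere has already ``thickened'' to a disk, the attaching map sends the degree-$j$ generator to a coboundary and the degree-$(j-1)$ generator to a bounding element of $\bA(t)$, and one must check that the levelwise pushout collapses back onto $\bA(t)$ precisely because the attaching then occurs along an isomorphism — this is what makes a persistent Hirsch extension ``die'' at $t_0$ in a manner coherent with the structure maps. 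The remaining verifications are routine unwindings of the definitions of $\Sp$, $\Di$ and the pointwise formation of colimits.
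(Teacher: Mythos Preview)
Your proposal is correct and follows essentially the same approach as the paper: reduce first over the grading and then over an interval decomposition to the case of a single interval module, and verify that case by computing the defining pushout pointwise. The only organizational difference is that the paper packages the single-interval computation into the subsequent \Cref{interval-attaching-hirsch}, whereas you carry out the levelwise case split inline; the content is the same.
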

\begin{proof}
    Note that if $\V$ is concentrated in degree 0, then $\bA \langle \V \rangle =0$. If $\V=\bigoplus_{n> 0} \V_n$ is of finite type, there is a decomposition $\bA \langle \V \rangle = \bigotimes_{n\geq 0} \bA \langle \V_n \rangle$, so that we can reduce to proving the statement when $\V$ is concentrated in a single degree $n>0$. In this case, if $\V=\bigoplus_\alpha \I^n_{[s_\alpha,t_\alpha)}$ is an interval decomposition of $\V$, we similarly find $\bA\langle \V \rangle = \bigotimes_\alpha \bA\langle \I^n_{[s_\alpha, t_\alpha)} \rangle$, so that it suffices to prove the statement when $\V=\I_{[s,t)}^n$ is an interval module. The result now follows from the lemma.
\end{proof}
\begin{lemma} \label{interval-attaching-hirsch}
    A Hirsch extension $\bA \to \bA \langle \I_{[s,t)}^{k} \rangle$ of $\bA$ by an interval module $\I_{[s,t)}^k$ is determined by the choice of a pair $(x_t, y_s) \in \bA^{k-1}(t) \times \bA^{k}(s)$ satisfying $dx_t=y_t$. In this situation, the components of $\bA \langle \I_{[s,t)}^{k} \rangle$ are described as follows:
    $$
    \bA \langle \I_{[s,t)}^{k} \rangle (u) \cong 
    \begin{cases}
        \bA(u)\langle \: \gamma_u \: | \: d\gamma_u=y_u \: \rangle 
        & \text{when $u\in [s,t)$},\\
        \bA(u) & \text{otherwise.}
    \end{cases}
    $$
    Moreover, the structure maps of $\bA \langle \I_{[s,t)}^{k} \rangle$ have the following form:
    $$
    \bA \langle \I_{[s,t)}^{k} \rangle(u<v) \cong 
    \begin{cases}
        \bA(u) \xrightarrow{u<v} \bA(v) \hookrightarrow \bA(v) \langle \gamma_v\rangle
        & \text{when $u<s$ and $v\in [s,t)$},\\
        \bA(u)\langle x_u \rangle \xrightarrow[\gamma_u \mapsto x_v]{} \bA(u) \xrightarrow[u<v]{} \bA(v)
        & \text{when $u\in [s,t)$ and $v\geq t$},
    \end{cases}
    $$
    and isomorphisms otherwise.
\end{lemma}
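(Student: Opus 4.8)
The plan is to reduce the statement to a pointwise computation of the pushout that defines the persistent Hirsch extension. First I would identify the attaching data: by the free-forgetful adjunction $\Lambda\dashv U$ together with the natural isomorphism $\Hom(\Sp^k_{[s,t)},\X)\cong Z\X^k(s)\times_{Z\X^k(t)}\X^{k-1}(t)$ recorded above, an attaching map $\Lambda\Sp^k_{[s,t)}\to\bA$ is the same datum as a pair consisting of a $k$-cocycle $y_s\in Z\bA^k(s)$ and a cochain $x_t\in\bA^{k-1}(t)$ with $dx_t=y_t$, where throughout I write $y_u:=\bA(s<u)(y_s)$. This gives the first assertion; the cocycle condition on $y_s$, left implicit in the statement, is part of the Hom-identity.

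Next I would evaluate the pushout at each index. Colimits in the functor category $p\CDGA_\Q$ are computed objectwise, and since $\Lambda:p\Ch_\Q\to p\CDGA_\Q$ is post-composition with the free functor $\Lambda:\Ch_\Q\to\CDGA_\Q$ it is itself objectwise and preserves colimits; hence $\bB:=\bA\langle\I_{[s,t)}^{k}\rangle$ satisfies $\bB(u)=\bA(u)\sqcup_{\Lambda(\Sp^k_{[s,t)}(u))}\Lambda(\Di^k_s(u))$ in $\CDGA_\Q$ for every $u$. Reading off $\Sp^k_{[s,t)}$ and $\Di^k_s$ from \Cref{spheres} splits the computation into three regimes: for $u<s$ both complexes are $0$, so $\Lambda$ of each is $\Q$ and the pushout is $\bA(u)$; for $u\in[s,t)$ the span is $\bA(u)\leftarrow\Lambda S^k\hookrightarrow\Lambda D^k$ with $\Lambda S^k\to\bA(u)$ classifying the cocycle $y_u$, whose pushout is by definition the ordinary Hirsch extension $\bA(u)\langle\gamma_u\mid d\gamma_u=y_u\rangle$; and for $u\geq t$ (when $t$ is finite) the map $\Lambda(\Sp^k_{[s,t)}(u))\to\Lambda(\Di^k_s(u))$ is the identity of $\Lambda D^k$, so the pushout is $\bA(u)$ again. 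This yields the asserted components of $\bB$.

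It remains to identify the structure maps $\bB(u<v)$, which by functoriality of the pushout are induced by $\bA(u<v)$ and by the structure maps of $\Sp^k_{[s,t)}$ and $\Di^k_s$ through the universal property. For $u<s\leq v<t$ the sphere and disk go $0\to S^k$ and $0\to D^k$, and one reads off $\bB(u<v)=\bigl(\bA(u)\xrightarrow{\bA(u<v)}\bA(v)\hookrightarrow\bA(v)\langle\gamma_v\rangle\bigr)$. For $s\leq u<t\leq v$ the sphere's transition morphism is $S^k\hookrightarrow D^k$ while the disk's is $\id_{D^k}$, which forces $\bB(u<v):\bA(u)\langle\gamma_u\rangle\to\bA(v)$ to be $\bA(u<v)$ on $\bA(u)$ and to send $\gamma_u\mapsto x_v:=\bA(t<v)(x_t)$; this is a well-defined CDGA homomorphism because $dx_v=\bA(t<v)(dx_t)=\bA(t<v)(y_t)=y_v=\bA(u<v)(d\gamma_u)$, using the relation $dx_t=y_t$ and naturality of $d$. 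In the remaining cases $\bB(u<v)$ is simply $\bA(u<v)$ (when $u$ and $v$ lie on the same side of $[s,t)$) or its evident extension $\bA(u)\langle\gamma_u\rangle\to\bA(v)\langle\gamma_v\rangle$, $\gamma_u\mapsto\gamma_v$ (when $s\leq u<v<t$).

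The argument is essentially organised bookkeeping; the two steps requiring genuine care are recognising the local pushout at $u\in[s,t)$ as a bona fide Hirsch extension — which relies on $y_u$ being a cocycle, guaranteed by the Hom-identity — and the well-definedness check for $\gamma_u\mapsto x_v$ at the transition $u<t\leq v$, which is precisely where $dx_t=y_t$ is used. One should also record the degenerate case $t=\infty$, where there is no "$u\geq t$" regime and $\bB$ carries the new generator $\gamma_u$ throughout $[s,\infty)$.
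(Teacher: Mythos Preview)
Your proposal is correct and follows essentially the same approach as the paper: identify the attaching map via the adjunction and the $\Hom$-description of $\Sp^k_{[s,t)}$, compute the pushout pointwise in the three regimes, and read off the structure maps from naturality of the pointwise pushout. You are in fact somewhat more careful than the paper, which does not spell out the well-definedness check for $\gamma_u\mapsto x_v$ or the $t=\infty$ case.
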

\begin{proof}
    A Hirsch extension $\bA \to \bA \langle \I_{[s,t)}^{k} \rangle$ is fully determined by the choice of a map $\Lambda \Sp_{[s,t)}^k \to \bA$, since $\Sp_{[s,t)}^k=\Sp(\I_{[s,t)}^k)$. Such a map is in turn characterized by a pair $(x_t, y_s)$ as above. Since pushouts in persistent objects can be computed pointwise, the following square
    $$
    \begin{tikzcd}
        \Lambda \Sp_{[s,t)}^k(u) \ar[r] \ar[d] & \bA(u) \ar[d] \\
        \Lambda \Di_s^k(u) \ar[r, "\gamma_u"] & \bA \langle \I_{[s,t)}^{k} \rangle (u) \arrow[ul, phantom, "\ulcorner", very near start]
    \end{tikzcd}
    $$
    is a pushout diagram which is natural in the time index $u$. We thus obtain natural isomorphisms $\bA \langle \I_{[s,t)}^{k} \rangle (u) \cong \bA(u) \langle \I_{[s,t)}^{k}(u) \rangle$. Naturality ensures that, given time indices $u<s\leq v < t \leq w$, the corresponding structure maps fit into the diagram
    $$
    \begin{tikzcd}
        \bA(u) \ar[d, equals]\ar[r] 
        & \bA(v) \ar[d]\ar[r] 
        & \bA(w) \ar[d, equals] \\
        \bA(u) \ar[r] 
        & \bA(v)\langle \gamma_v \rangle \ar[r, "\gamma_v \mapsto x_w"'] 
        & \bA(w).
    \end{tikzcd}
    $$
    This concludes the proof.
\end{proof}

The next statement is a persistent analogue of the fact that an ordinary Hirsch extension $\cA \to \cA\langle V \rangle$ factors as a (possibly transfinite) composite of elementary extensions $\cA \to \cA\langle v_i \rangle$, indexed by a basis $\{v_i\}$ of $V$. For the sake of simplicity, we limit ourselves to the case of persistence modules of finite type.

\begin{proposition}
    Let $\bA$ be a connected persistent CDGA, and $\V$ a graded persistence module of finite type. Each persistent Hirsch extension $\bA \to \bA\langle \V \rangle$ factors as a finite sequence of Hirsch extensions along interval modules $\I_{[s,t)}^{k}$.
\end{proposition}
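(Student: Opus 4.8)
The plan is to recognize the persistent Hirsch extension $\bA \to \bA\langle\V\rangle$ as a composite of pushouts, one for each interval summand of $\V$, and then to identify each successive factor as a Hirsch extension along an interval module by means of the pasting law for pushouts in $p\CDGA_\Q$. The whole argument is essentially a bookkeeping exercise; the only genuine input beyond formal nonsense is the interval decomposition of $\V$ supplied by \Cref{interval-decomposition}.

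First I would decompose $\V$. Being tame and of finite type, $\V$ splits, by \Cref{interval-decomposition} and the finiteness remark following it, as a direct sum of interval modules involving only finitely many summands, say $\V \cong \bigoplus_{i=1}^m \I^{k_i}_{[s_i,t_i)}$. Since the free functor $\Lambda$ is a left adjoint it carries $\Sp(\V)=\bigoplus_i \Sp^{k_i}_{[s_i,t_i)}$ and $\Di(\V)=\bigoplus_i \Di^{k_i}_{[s_i,t_i)}$ to the coproducts $\bigotimes_i \Lambda\Sp^{k_i}_{[s_i,t_i)}$ and $\bigotimes_i \Lambda\Di^{k_i}_{[s_i,t_i)}$ in $p\CDGA_\Q$, and the attaching map $\Lambda\Sp(\V)\to\bA$ of the given extension restricts to a map $a_i:\Lambda\Sp^{k_i}_{[s_i,t_i)}\to\bA$ on each factor.

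Next I would build the tower. Set $\bA_0=\bA$, and inductively let $\bA_i$ be the pushout of $\Lambda\Sp^{k_i}_{[s_i,t_i)}\to\Lambda\Di^{k_i}_{[s_i,t_i)}$ along the composite $\Lambda\Sp^{k_i}_{[s_i,t_i)}\xrightarrow{a_i}\bA\to\bA_{i-1}$. Because $\Sp(\I^{k_i}_{[s_i,t_i)})=\Sp^{k_i}_{[s_i,t_i)}$ and likewise for disks, \Cref{persistent-hirsch-extension} says each $\bA_{i-1}\to\bA_i$ is a persistent Hirsch extension of $\bA_{i-1}$ along the interval module $\I^{k_i}_{[s_i,t_i)}$, described concretely by \Cref{interval-attaching-hirsch}. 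The iterated pushout law then identifies $\bA_m$ with the single pushout defining $\bA\langle\V\rangle$, so $\bA=\bA_0\to\bA_1\to\cdots\to\bA_m=\bA\langle\V\rangle$ is the desired finite factorization.

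The step I expect to require the most care is the pasting-law bookkeeping: one must verify that the attaching map at the $i$-th stage of the tower is exactly $a_i$ followed by the structure map $\bA\to\bA_{i-1}$, so that the step-by-step pushouts genuinely recompute $\bA\langle\V\rangle$. This comes down to combining the universal property of the coproduct $\bigotimes_i\Lambda\Sp^{k_i}_{[s_i,t_i)}$ (a morphism out of it is the same as a family of morphisms out of the individual factors) with the fact that a ``staircase'' of pushouts computes the pushout along the coproduct. Connectedness of $\bA$ plays no essential role here beyond guaranteeing that the resulting extensions take the explicit form recorded in \Cref{interval-attaching-hirsch}.
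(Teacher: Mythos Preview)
Your proposal is correct and follows essentially the same strategy as the paper: both arguments use the finite interval decomposition of $\V$ and the pasting law for pushouts to factor the single Hirsch extension into a sequence of interval attachments. The paper phrases it as factoring the inclusion $\Sp(\W)\subseteq\Di(\W)$ through $\Sp(\V)\oplus\Di_s^k$ and then inducting, while you phrase it as decomposing the attaching map via the coproduct and building the tower directly; these are two ways of organizing the same pushout-pasting computation.
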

\begin{proof}
    Given a decomposition $\W=\V \bigoplus \I_{[s,t)}^k$, the inclusion $\Sp(\W) \subseteq \Di(\W)$ factors as 
    $$
    \Sp(\W)=\Sp(\V)\oplus \Sp_{[s,t)}^k \subseteq \Sp(\V) \oplus \Di_{s}^k \subseteq \Di(\V)\oplus \Di_s^k = \Di(\W). 
    $$
    As a result, any Hirsch extension $\bA \to \bA\langle \W \rangle$ factors as a composite of Hirsch extensions $\bA \to \bA \langle \V \rangle \to \bA \langle \V \rangle \langle \I_{[s,t)}^k \rangle$. The result follows from an easy induction.
\end{proof}

\subsection{Persistent Minimal Models} The use of a finite indexing set $[n]$ allows us to avoid complicated details relating to homotopy coherence in persistent minimal models over arbitrary linear posets. The definition we give here specifies homotopies only between neighbouring indices, and is designed so that the issue of homotopy coherence is considerably simplified.

\begin{definition} \label{persistent-minimal-model-definition}
    Let $\bA$ be a persistent CDGA. A \textit{$k$-minimal model} $m : \bM \xrightarrow{k}_2 \bA$ for $\bA$ is the data of:
\begin{equation} \label{tame-minimal-model} \begin{tikzcd}[sep=scriptsize]
	\ldots && {\bM(r)} && {\bM(r+1)} && {\bM(r+2)} && \ldots \\
	\\
	\ldots && {\bA(r)} && {\bA(r+1)} && {\bA(r+2)} && \ldots
	\arrow[from=1-1, to=1-3]
	\arrow[from=1-3, to=1-5]
	\arrow["{m(r)}", from=1-3, to=3-3]
	\arrow[from=1-5, to=1-7]
	\arrow["{H(r)}", shorten <=13pt, shorten >=13pt, Rightarrow, from=1-5, to=3-3]
	\arrow["{m(r+1)}", from=1-5, to=3-5]
	\arrow[from=1-7, to=1-9]
	\arrow["{H(r+1)}", shorten <=14pt, shorten >=14pt, Rightarrow, from=1-7, to=3-5]
	\arrow["{m(r+2)}", from=1-7, to=3-7]
	\arrow[from=3-1, to=3-3]
	\arrow[from=3-3, to=3-5]
	\arrow[from=3-5, to=3-7]
	\arrow[from=3-7, to=3-9]
\end{tikzcd}\end{equation} such that each $m(r) : \bM(r) \to \A(r)$ is a $k$-minimal model.
\end{definition}

\begin{remark}
    The definition above describes a pseudo-natural transformation from $[n]$ with the trivial 2-categorical structure into CDGA with the 2-category structure induced by its simplicial enrichment, motivating the notation $m : \bM \to_2 \bA$.
\end{remark}

 The diagram \ref{tame-minimal-model} is a \textit{minimal model} when each $m(r) : \bM(r) \to \bA(r)$ is a minimal model. We will refer to the object $\bM$ as the minimal model for $\bA$, with the maps and homotopies defined implicitly. Since each $\bM(r)$ is cofibrant in the usual model structure on $\CDGA_\Q$, homotopy is an equivalence relation, implying that \textit{all} of the structure maps in $\bM$ are Sullivan representatives for the corresponding ones in $\bA$.

\subsection{Main construction} 
\subsubsection{Cones} For persistent CDGAs, specifying \textit{explicit choices} of the homotopies permits the construction of a functorial cone. The key is that homotopies are encoded into the structure maps. Each square in the persistent minimal model
\begin{center}
        \begin{tikzcd}[row sep = 3em, column sep = 4em]
            \bM(r) \ar[r, "\bM(r < r+1)"] \ar[d, "m(r)"'] & \bM(r+1) \ar[d, "m(r+1)"] \ar[dl, Rightarrow, "H(r)", shorten = 1.0ex]\\
            \bA(r) \ar[r, "\bA(r < r+1)"'] & \bA(r+1)
        \end{tikzcd}
\end{center} induces a map of cochain cones given by
\begin{align*} 
\varphi(r) : C_{m(r)} & \to C_{m(r+1)}\\
(v_r,a_r) & \mapsto \Big(v_{r+1}, a_{r+1} + \int_0^1 H(r)(v_r) \Big)
\end{align*}
where $v_{r+1} = \bM(r<r+1)(v_r)$ and $a_{r+1} = \bA(r<r+1)(a_r)$. 

\begin{definition}
    Let $\bM$ be a $k$-minimal model for $\bA \in p\CDGA_\Q$, with maps and homotopies $\{ m(r), H(r) \}$. The \textit{cone} $\C_m : [n] \to \Ch_\Q$ of $m$ is the functor generated by the sequence $$C_{m(0)} \xrightarrow{\varphi(0)} C_{m(1)} \xrightarrow{\varphi(1)} \ldots \xrightarrow{\varphi(n-1)} C_{m(n)}.$$
\end{definition} 

\subsubsection{Interval surgery} The classical approximation telescope (\ref{pointwise_induction_lemma}) successively corrects defects in a minimal model by attaching in the cohomological generators of the cone.  In the persistent setting, the cohomological generators of the cone are intervals. Persistent Hirsch extensions then permit an 'interval surgery', where attaching in such intervals produces an extension of a $(k-1)$-minimal model to a $k$-minimal model.

\begin{lemma}[Interval surgery] \label{persistent-inductive-cone} Let $\bA \in p\CDGA_\Q$ be simply-connected and $m : \bM \to_2 \bA$ be a $(k-1)$-minimal model with maps and homotopies $\{ m(r), H(r) \mid r \in [n] \}$ and cone $\C_m$. There exists an extension to a tame $k$-minimal model $\overline{m} :\overline{\bM} \to_2 \bA$ fitting into the following diagram.
\[\begin{tikzcd}[sep = scriptsize]
	{\Lambda \Sp^{k+1}(H^k\C_m)} && \bM && \bA \\
	\\
	{\Lambda \Di^{k+1}(H^k\C_m)} && {\overline{\bM}}
	\arrow[from=1-1, to=1-3]
	\arrow[hook, from=1-1, to=3-1]
	\arrow["m", from=1-3, to=1-5]
	\arrow["2"'{pos=0.8}, from=1-3, to=1-5]
	\arrow[hook, from=1-3, to=3-3]
	\arrow[from=3-1, to=3-3]
	\arrow["{\overline{m}}"', from=3-3, to=1-5]
	\arrow["2"'{pos=0.9}, from=3-3, to=1-5]
\end{tikzcd}\]
\end{lemma}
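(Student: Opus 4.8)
The plan is to mimic the classical approximation telescope (Diagram \ref{pointwise_induction_lemma}) and its map-version \Cref{inductivestep}, but carried out uniformly in the discretization index and organized so that the persistent Hirsch extension of \Cref{persistent-hirsch-extension} can be applied in one stroke. Fix a discretization $t_0 < \dots < t_n$ of $\bA$ which also discretizes $\bM$ (enlarge it if necessary). The key observation is that the tame cone $\C_m$ is, by definition, the left Kan extension of the finite sequence $C_{m(0)} \xrightarrow{\varphi(0)} \cdots \xrightarrow{\varphi(n-1)} C_{m(n)}$, so its persistent $k$-th cohomology $H^k\C_m$ is the left Kan extension of $H^kC_{m(0)} \to \cdots \to H^kC_{m(n)}$; since $\bA$ and $\bM$ are tame of finite type (the $\bM(r)$ are $(k-1)$-minimal, hence finitely generated through degree $k-1$, and $H^{\leq k}$ of each cone is finite dimensional), $H^k\C_m$ is a tame persistence module of finite type, concentrated in degree $k$. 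Thus $\Lambda\Sp^{k+1}(H^k\C_m) \to \Lambda\Di^{k+1}(H^k\C_m)$ is exactly a persistent Hirsch extension of degree $k$ in the sense of \Cref{persistent-hirsch-extension}, and by \Cref{pers_hirsch_structure} the pushout $\overline{\bM}$ it produces is, at each index $r$, the classical Hirsch extension $\bM(r)\langle H^kC_{m(r)}\rangle$, with structure maps $\overline{\bM}(r<r+1)$ the Hirsch extensions of $\bM(r<r+1)$ along $H^k\varphi(r)$.

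The heart of the argument is to produce the attaching map $\Lambda\Sp^{k+1}(H^k\C_m) \to \bM$, the extension $\overline{m}$, and the extended homotopies $\overline{H}(r)$ \emph{coherently in $r$}. I would do this by induction on $r = 0,1,\dots,n$, replaying the construction of \Cref{inductivestep} at the step $(r<r+1)$: there the map $\psi = H^k\varphi(r) : H^kC_{m(r)} \to H^kC_{m(r+1)}$ plays the role of the map $\psi$ in that lemma, and one chooses bases adapted to $\Coim(\psi)\oplus\Ker(\psi)\to\im(\psi)\oplus\Coker(\psi)$, picks cocycle representatives $(v,a)$ in the cones, and defines the new generators' differentials, the extended $\overline{m}(r+1)$, the map $\overline{\bM}(r<r+1)$ on new generators, and the extended homotopy $\overline{H}(r)$ by the explicit formulae of that proof. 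The only new bookkeeping is that at stage $r+1$ the representatives chosen at stage $r$ must be \emph{transported forward} via $\varphi(r)$, so that the cocycle representing $\psi(\epsilon_i^{(r)})\in H^kC_{m(r+1)}$ is literally $\varphi(r)$ applied to the representative of $\epsilon_i^{(r)}$; this is exactly what makes the squares of attaching maps, of $\overline{m}$, and of $\overline{H}$ commute on the nose. Concretely: start at $r=0$ with $\bM(-1)=\Q$-data vacuous, choose a basis of $H^kC_{m(0)}$ with cocycle reps, and at each subsequent stage only freely choose reps for the summand of $H^kC_{m(r+1)}$ \emph{not} hit from $H^kC_{m(r)}$ (i.e.\ a complement of $\im(H^k\varphi(r))$), inheriting the rest. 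That each $\overline{m}(r)$ is a genuine $k$-minimal model (i.e.\ $\overline{\bM}(r)$ minimal and $H^kC_{\overline{m}(r)}=0$) and that $\overline{H}(r)$ is a genuine homotopy from $\bA(r<r+1)\circ\overline{m}(r)$ to $\overline{m}(r+1)\circ\overline{\bM}(r<r+1)$ is checked by the same local computations as in \Cref{inductivestep}.

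Finally, assembling: the pointwise attaching maps $\Lambda\Sp^{k+1}(H^kC_{m(r)}) \to \bM(r)$ are natural in $r$ by construction (representatives transported along $\varphi$), hence assemble to a map of persistent CDGAs $\Lambda\Sp^{k+1}(H^k\C_m)\to\bM$ by the universal property of left Kan extension; taking the pushout in $p\CDGA_\Q$ (computed pointwise) gives $\overline{\bM}$, and the pointwise $\overline{m}(r)$ and $\overline{H}(r)$ glue to the pseudo-natural transformation $\overline{m}:\overline{\bM}\to_2\bA$. Tameness of $\overline{\bM}$ is automatic since it is a left Kan extension of a finite sequence along the discretization, and each $\overline{m}(r)$ is a $k$-minimal model as shown, so $\overline{m}$ is a tame $k$-minimal model. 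I expect the main obstacle to be purely organizational rather than conceptual: keeping the choices of bases and cocycle representatives consistent across the index $r$ so that \emph{all three} naturality squares (attaching maps, $\overline{m}$, and $\overline{H}$) commute strictly — the trick being to make all choices "downstream" by transport along $\varphi(r)$ and only make free choices on complements of the images, exactly as one does when building a minimal model of a map in \Cref{minimal_models_maps}. A secondary point requiring a line of care is the finiteness/tameness of $H^k\C_m$, which is needed to legitimately invoke \Cref{persistent-hirsch-extension}; this follows from exactness of filtered colimits in $\Ch_\Q$ together with the finite-type hypotheses carried along the induction.
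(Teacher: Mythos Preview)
Your proposal is correct and shares the same underlying mechanism as the paper's proof: transport cocycle representatives forward along the cone maps $\varphi(r)$, reuse the explicit formulae of \Cref{inductivestep} for $\overline{m}$ and $\overline{H}$, and verify $k$-minimality pointwise. The difference is organizational. You induct on the discretization index $r$, working with the $\Ker/\Coim/\im/\Coker$ decomposition of each $H^k\varphi(r)$ and only making free choices on complements of images; the paper instead fixes an interval decomposition $H^k\C_m \cong \bigoplus \bI_{[p,q)}$ upfront and treats each interval $\bI_{[p,q)}$ separately, choosing a single cocycle representative $(v_p,a_p)$ at the birth time, propagating it to $(v_r,\tilde a_r)$ via $\varphi$, and using a witness $d(u_q,b_q)=(v_q,\tilde a_q)$ at the death time to produce the attaching map $(v_p,u_q):\Lambda\Sp^{k+1}_{[p,q)}\to\bM$ directly.

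The paper's interval-by-interval organization buys a cleaner description of the attaching map $\Lambda\Sp^{k+1}(H^k\C_m)\to\bM$ (it is literally $\otimes(v_p,u_q)$, matching \Cref{interval-attaching-hirsch}), and the naturality in $r$ is automatic because one interval gives one coherent choice across its lifespan. Your index-by-index organization makes it more transparent that each square $r<r+1$ is exactly an instance of \Cref{inductivestep}, at the cost of having to argue separately that the local $\Ker/\Coim$ splittings glue to a global interval decomposition (which they do, by \Cref{interval-decomposition}). Both routes are sound; the paper's is shorter once the interval decomposition is invoked.
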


\begin{remark}
    "Commutativity" of the right-most triangle is used to signify that new homotopies between neighbouring indices are extensions of old ones. 
\end{remark}

\begin{proof}
    Pick an interval decomposition $H^k\C_m \cong \bigoplus \bI_{[p,q)}$. We will attach each interval independently, making the requisite extensions of $m$ and the homotopies $H$ over the new elements. Choose a persistent cocycle representative
\[\begin{tikzcd}[sep = scriptsize]
	{(v_p,a_p) = (v_p,\tilde{a}_p)} && {\Big(v_{p+1},a_{p+1} + \int_0^1H(p)(v_p)\Big)=(v_{p+1},\tilde{a}_{p+1})} && \cdots & {(v_q, \tilde{a}_q)}
	\arrow["{\varphi(p)}", from=1-1, to=1-3]
	\arrow["{\varphi(p+1)}", from=1-3, to=1-5]
	\arrow[from=1-5, to=1-6]
\end{tikzcd}\] for $\bI_{[p,q)}$ in the $k$-th tame cone cohomology, where we denote $\tilde{a}_{r+1} = \pi_\mathbb{A}\varphi(r)(v_r,\tilde{a}_r)$ for the projection $\pi_\mathbb{A}: \mathbb{C}_{m} \to \mathbb{A}$. Let $d(u_t,b_t) = (v_t,a_t)$ witness exactness in the cone at time $t$. Make an interval extension
\[\begin{tikzcd}[sep = scriptsize]
	{\Lambda \Sp^{k+1}_{[p,q)}} && \bM \\
	\\
	{\Lambda \Di^{k+1}_{p}} && {\bM'.}
	\arrow["{(v_p,u_q)}", from=1-1, to=1-3]
	\arrow[from=1-1, to=3-1]
	\arrow[from=1-3, to=3-3]
	\arrow["{\gamma_p}"', from=3-1, to=3-3]
	\arrow["\lrcorner"{anchor=center, pos=0.125, rotate=180}, draw=none, from=3-3, to=1-1]
\end{tikzcd}\] of $\bM$ along $\Lambda \bI_{[p,q)}$. Using the formulas of \Cref{inductivestep}, define extensions $\overline{m}(r)(\gamma_r) = \tilde{a}_r$ for all $p \leq r < q$ and 
$$\overline{H}(r)(\gamma_r) = \begin{cases} \bA(r<r+1)\tilde{a}_r + \int_0^t H(r)(v_r) & p \leq r < q-1\\
    \bA(q-1<q)(\tilde{a}_{q-1}) + \int_0^t H(q-1)(v_{q-1}) + d(y_q \otimes t) & r = q-1 \end{cases}$$ which are well-defined CDGA maps by the same arguments. Concurrently attaching all intervals
\[\begin{tikzcd}[row sep = scriptsize]
	{\Lambda \Sp^{k+1}(H^k\C_m) = \bigotimes \Lambda \Sp^{k+1}_{[p,q)}} && \bM && \bA \\
	\\
	{\Lambda \Di^{k+1}(H^k\C_m) = \bigotimes \Lambda \Di^{k+1}_p} && {\overline{\bM}}
	\arrow["{\otimes(v_p,u_q)}", from=1-1, to=1-3]
	\arrow[hook, from=1-1, to=3-1]
	\arrow["m", from=1-3, to=1-5]
	\arrow["2"'{pos=0.8}, from=1-3, to=1-5]
	\arrow[hook, from=1-3, to=3-3]
	\arrow["{\otimes \gamma_p}", from=3-1, to=3-3]
	\arrow["\lrcorner"{anchor=center, pos=0.125, rotate=180}, draw=none, from=3-3, to=1-1]
	\arrow["2"'{pos=0.9}, from=3-3, to=1-5]
	\arrow["{\overline{m}}"', from=3-3, to=1-5]
\end{tikzcd}\]
    thus yields a well-defined extension of $m$ to $\overline{m} : \overline{\bM} \to_2 \bA$. To see that this is a $k$-minimal model, note that it restricts exactly to the construction of minimal models of maps between neighbouring discretization indices from \Cref{inductivestep} 
\end{proof}

\begin{theorem}[Structure Theorem]\label{structure-theorem}
    Every  simply-connected, finite-type persistent CDGA $\bA$ admits a persistent minimal model $\bM$ whose skeletal filtration is constructed as a sequence of degree $k$ persistent Hirsch extensions
\begin{equation} \label{main-cofiber-sequence-diagram} \begin{tikzcd}[sep=scriptsize]
	{\Lambda \Sp^{k+1}(H^k\C_m)} && {\bM_{k-1}} && \bA \\
	\\
	{\Lambda \Di^{k+1}(H^k\C_m)} && {\bM_k}
	\arrow[from=1-1, to=1-3]
	\arrow[hook, from=1-1, to=3-1]
	\arrow["m_{k-1}", from=1-3, to=1-5]
	\arrow["2"'{pos=0.8}, from=1-3, to=1-5]
	\arrow[hook, from=1-3, to=3-3]
	\arrow[from=3-1, to=3-3]
	\arrow["{m_k}"', from=3-3, to=1-5]
	\arrow["2"'{pos=0.9}, from=3-3, to=1-5]
\end{tikzcd}\end{equation} where $\C_{m_{k-1}}$ is the tame cone at degree $(k-1)$.
\end{theorem}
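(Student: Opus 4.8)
The plan is to iterate the interval surgery of \Cref{persistent-inductive-cone}, starting from the trivial model, and then pass to a colimit, verifying along the way that tameness and finite type are preserved so that each surgery step is legitimate. \emph{Base case.} Since $\bA$ is tame and simply-connected, the constant persistent CDGA $\bM_1 = \Q$, together with the unit maps $\Q \to \bA(r)$ and the trivial homotopies, forms a tame $1$-minimal model $m_1 : \bM_1 \to_2 \bA$: pointwise, $\Q \to \bA(r)$ is $2$-connected because $H^0\bA(r) = \Q$ and $H^1\bA(r) = 0$, while $\Q$ is a $1$-minimal algebra.

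\emph{Inductive step.} Suppose a tame $(k-1)$-minimal model $m_{k-1} : \bM_{k-1} \to_2 \bA$ has been constructed, with each $\bM_{k-1}(r)$ of finite type (a condition maintained inductively, since $\bA$ is finite type and each step adjoins only finitely many generators). Then the tame cone $\C_{m_{k-1}}$ --- a left Kan extension, along the discretization of $\bA$, of a finite sequence of degreewise finite-dimensional cochain complexes --- is a tame persistent cochain complex with $H^k\C_{m_{k-1}}$ a tame persistence module of finite type, hence one admitting a finite interval decomposition (\Cref{interval-decomposition}). So \Cref{persistent-inductive-cone} applies: it yields the degree-$k$ persistent Hirsch extension displayed in \Cref{main-cofiber-sequence-diagram}, together with a tame $k$-minimal model $m_k : \bM_k \to_2 \bA$ extending $m_{k-1}$ and whose homotopies $H_k(r)$ extend the $H_{k-1}(r)$ (the meaning of the ``$2$''-labelled triangle). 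Since $\bM_k$ is obtained from $\bM_{k-1}$ by adjoining finitely many intervals concentrated in a single degree, it is again finite type and tame with the discretization of $\bA$, so the induction continues.

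\emph{Passage to the colimit.} Set $\bM = \colim_k \bM_k$, computed pointwise, so $\bM(r) = \bigcup_k \bM_k(r)$; by \Cref{pers_hirsch_structure} each inclusion $\bM_{k-1}(r) \subseteq \bM_k(r)$ is a Hirsch extension of degree $k$, so $\{\bM_k(r)\}_k$ is a skeletal filtration exhibiting $\bM(r)$ as a minimal algebra, and the extensions of \Cref{main-cofiber-sequence-diagram} are its stages. For each pair of neighbouring discretization indices $r < r+1$ we have a tower of homotopy-commutative squares with mutually compatible homotopies $H_k(r)$, so \Cref{towerlemma} produces a homotopy $H(r) = \colim_k H_k(r)$ filling the colimit square; assembling these gives a pseudo-natural transformation $m : \bM \to_2 \bA$. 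Finally, each $m(r)$ is a quasi-isomorphism: the mapping cone $\Cone(\bM_k(r) \to \bA(r))$ is $k$-connected, and since filtered colimits are exact in $\Ch_\Q$ we get $H^*\Cone(\bM(r) \to \bA(r)) = \colim_k H^*\Cone(\bM_k(r) \to \bA(r)) = 0$. Hence $\bM$ is a tame persistent minimal model of $\bA$ with the asserted skeletal filtration.

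\emph{Main obstacle.} The delicate point is the inductive step: one must check that interval surgery can legitimately be iterated, i.e. that the persistence modules $H^k\C_{m_{k-1}}$ remain tame and of finite type at every stage --- so that \Cref{persistent-hirsch-extension} applies and the interval decomposition is finite --- and that the tame cone stays well-defined. This is precisely where the finite-type and tameness hypotheses on $\bA$ are consumed. Once this is settled, the passage to the colimit is routine, resting only on \Cref{towerlemma} and exactness of filtered colimits, in close parallel with the proof of \Cref{minimal_models_maps}.
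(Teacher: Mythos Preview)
Your proof is correct and follows the approach the paper intends: iterate \Cref{persistent-inductive-cone} from the trivial $1$-model $\Q \to_2 \bA$, then pass to the colimit via \Cref{towerlemma} and exactness of filtered colimits, in direct parallel with \Cref{minimal_models_maps}. The paper leaves these details implicit, so your verification that tameness and finite type are preserved at each stage (so that the interval surgery hypotheses are met) is a welcome addition rather than a deviation.
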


This generalizes the interval decomposition, where the direct sum is replaced by a twisted coproduct of cells. Combining this theorem with the structure of persistent Hirsch extensions (\ref{pers_hirsch_structure} and \ref{interval-attaching-hirsch}), the persistent minimal model is finitely encoded by two data at each level $k$.
\begin{enumerate}
    \item The interval summands $\bI_{[s,t)}$ of $H^k\C_{m_{k-1}}$.
    \item The attaching maps $$\Lambda \Sp^{k+1}_{[p,q)} \xrightarrow{(v_p,u_q)} \bM_{k-1}.$$
\end{enumerate} Together these data specify a surprisingly concise presentation
\begin{equation} \label{presentation} \begin{tikzcd}[sep=tiny]
	{\bM = p\Lambda(\mathscr{G} \mid \mathscr{R})} && {\Big\{ \, \, \gamma_p \in \bM^k(p) \, \, \Big\}} && {\Bigg\{ \, \, d\gamma_p = v_p, \gamma_q = u_q \, \, \Bigg\}} && {\bI_{[p,q)} \hookrightarrow H^k \C_{m_{k-1}}} \\
	{\text{Presentation}} && {\text{Generators } \mathscr{G}} && {\text{Relations } \mathscr{R}} && {\text{Intervals}}
\end{tikzcd} \end{equation} of the persistent homotopy type of $\bA$.

\section{Persistent rational homotopy theory}

In the previous section, we put special emphasis on appending intervals into the space via attachments along CDGA interval spheres. This approach allows us to generalize the classical correspondence (\ref{classical-postnikov}) between CDGA cell attachment and Postnikov towers to the persistent setting.

\subsection{Persistent cells and rational homotopy groups} In this section, let $p^*\sSet := \Fun([n]^{op}, \sSet)$ be the category of copersistent simplicial sets over $[n]$.

\subsubsection{Eilenberg-Maclane Spaces} Let $\V$ be a compact, copersistent graded rational vector space concentrated in degree $k$. A \textit{copersistent Eilenberg Maclane space} over $\V$ is a copersistent simplicial set $K(\V, k) \in p^*\sSet$ such that 
$$\pi_* K(\V,k) = \V.$$ Since homotopy groups commute with products naturally we have $$\V^\vee \cong \bigoplus \bI_{[s_\gamma,t_\gamma)} \hspace{2em} \Rightarrow \hspace{2em} \prod_\gamma K(\bI_{[s_\gamma,t_\gamma)}, k) \text{ is a } K(\V,k).$$ Applying \Cref{classical-postnikov} pointwise and examining the structure maps, a basic observation is that
$$\langle \Lambda \bS^k(\V) \rangle \text{ is a } K(\V,k),$$ i.e., that interval spheres are algebraic models of copersistent Eilenberg-Maclane spaces. 

\subsubsection{Postnikov Towers} Let $\X$ be a simply-connected copersistent simplicial set. A tame minimal model
\[\begin{tikzcd}[column sep=tiny,row sep=small]
	\cdots && {\mathbb{M}_i} && {\mathbb{M}_{i+1}} && {\mathbb{M}_{i+2}} && \cdots \\
	\\
	\cdots && {\mathcal{A}_{PL}(\mathbb{X}_i)} && {\mathcal{A}_{PL}(\mathbb{X}_{i+1})} && {\mathcal{A}_{PL}(\mathbb{X}_{i+2})} && {\cdots }
	\arrow[from=1-1, to=1-3]
	\arrow[from=1-3, to=1-5]
	\arrow["\simeq", from=1-3, to=3-3]
	\arrow[from=1-5, to=1-7]
	\arrow["\simeq", from=1-5, to=3-5]
	\arrow[from=1-7, to=1-9]
	\arrow["\simeq", from=1-7, to=3-7]
	\arrow[from=3-1, to=3-3]
	\arrow["{H_i}"', Rightarrow, from=3-3, to=1-5]
	\arrow[from=3-3, to=3-5]
	\arrow["{H_{i+1}}"', Rightarrow, from=3-5, to=1-7]
	\arrow[from=3-5, to=3-7]
	\arrow[from=3-7, to=3-9]
\end{tikzcd}\] of $\X$ yields an adjoint diagram:
\[\begin{tikzcd}[sep=small]
	\cdots && {\langle \mathbb{M}_i \rangle} && {\langle \mathbb{M}_{i+1} \rangle} && {\langle \mathbb{M}_{i+2} \rangle} && \cdots \\
	\\
	\cdots && {\mathbb{X}_i} && {\mathbb{X}_{i+1}} && {\mathcal{A}_{PL}(\mathbb{X}_{i+2})} && {\cdots }
	\arrow[from=1-3, to=1-1]
	\arrow[from=1-5, to=1-3]
	\arrow["{\langle H_i \rangle}", Rightarrow, from=1-5, to=3-3]
	\arrow[from=1-7, to=1-5]
	\arrow["{\langle H_{i+1} \rangle }", Rightarrow, from=1-7, to=3-5]
	\arrow[from=1-9, to=1-7]
	\arrow["\simeq"', from=3-3, to=1-3]
	\arrow[from=3-3, to=3-1]
	\arrow["\simeq"', from=3-5, to=1-5]
	\arrow[from=3-5, to=3-3]
	\arrow["\simeq"', from=3-7, to=1-7]
	\arrow[from=3-7, to=3-5]
	\arrow[from=3-9, to=3-7]
\end{tikzcd}\] This weak rational $2$-equivalence of copersistent spaces shows that 
$$\pi_*^\Q \langle \bM \rangle \cong \pi_*^\Q \X.$$ A direct application of the structure theorem (\ref{structure-theorem}) for persistent minimal model yields the following result generalizing (\ref{classical-postnikov}).

\begin{corollary} \label{persistent-eckmann-hilton}Let $\X : [n]^{op} \to \sSet$ be a copersistent, simply-connected space $\X$ with minimal model $\bM \to_2 \mathcal{A}_{PL}(X)$. The image of the skeletal filtration \ref{informal-cofiber-sequence-diagram} of $\bM$ under spatial realization 
\[\begin{tikzcd}[sep=tiny]
	{\bigotimes \Lambda \mathbb{S}^{k+1}_{[s,t)}} && {\mathbb{M}_{k-1}} && {\langle \mathbb{M}_{k}\rangle} && {\prod PK(\bI_{[s,t)}^\vee,k+1)} \\
	&&& {\xrightarrow{\langle - \rangle}} \\
	{\bigotimes \Lambda \mathbb{D}^{k+1}_{[s,\infty)}} && {\mathbb{M}_{k}} && {\langle \mathbb{M}_{k-1}\rangle} && { \prod K(\bI_{[s,t)}^\vee,k+1)}
	\arrow["\otimes\tau", from=1-1, to=1-3]
	\arrow[tail, from=1-1, to=3-1]
	\arrow[tail, from=1-3, to=3-3]
	\arrow[from=1-5, to=1-7]
	\arrow[two heads, from=1-5, to=3-5]
	\arrow["\lrcorner"{anchor=center, pos=0.125}, draw=none, from=1-5, to=3-7]
	\arrow[two heads, from=1-7, to=3-7]
	\arrow[from=3-1, to=3-3]
	\arrow["\lrcorner"{anchor=center, pos=0.125, rotate=180}, draw=none, from=3-3, to=1-1]
	\arrow["{\prod\langle \tau \rangle}", from=3-5, to=3-7]
\end{tikzcd}\] recovers a copersistent Postnikov tower for $\X$.
\end{corollary}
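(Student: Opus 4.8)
The plan is to deduce \Cref{persistent-eckmann-hilton} directly from the Structure Theorem (\ref{structure-theorem}) together with the Quillen pair (\ref{persistent-quillen-pair}) and the classical pointwise correspondence (\ref{classical-postnikov}), exactly mirroring the logic of the non-persistent case. The starting point is that, since $\X$ is tame, simply-connected and of finite type, the persistent CDGA $\apl(\X)$ satisfies the hypotheses of \Cref{structure-theorem}, so it admits a persistent minimal model $\bM$ whose skeletal filtration $\Q \subseteq \bM_2 \subseteq \bM_3 \subseteq \cdots$ is built by successive degree-$k$ persistent Hirsch extensions as in \eqref{main-cofiber-sequence-diagram}. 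Unwinding \Cref{pers_hirsch_structure} and \Cref{interval-attaching-hirsch}, each such extension is a pushout $\bM_{k-1}\to \bM_k$ of the standard inclusion $\bigotimes\Lambda\Sp^{k+1}_{[s,t)}\to\bigotimes\Lambda\Di^{k+1}_{[s,\infty)}$ along an attaching map $\otimes\tau$, where the interval summands of $H^k\C_{m_{k-1}}$ index the cells and, since $\bM$ is a minimal model of $\apl(\X)$, the persistence module $Q^k(\bM)$ of degree-$k$ indecomposables is $\pi_k^\Q(\X)^\vee$ (this is the persistent analogue of \cite[Theorem~15.1]{griffithsmorgan}, obtained by applying that isomorphism pointwise and noting the structure maps of $Q^k(\bM)$ are dual to those of $\pi_k^\Q(\X)$). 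This produces the left-hand square of the diagram.

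Next I would apply the spatial realization functor $\langle - \rangle$. By \Cref{persistent-quillen-pair} it is a left Quillen functor from the interval-sphere model structure on $p\CDGA_\Q$ to $(p^*\sSet)^{\op}$ with the projective $\Q$-model structure; in particular it sends the pushout square defining $\bM_{k-1}\to\bM_k$ to a pushout square in $(p^*\sSet)^{\op}$, i.e. a pullback square of copersistent spaces. It remains to identify the objects in that square. Pointwise, $\langle\Lambda\Di^{k+1}_{[s,\infty)}\rangle(u)$ is $\langle\Lambda D^{k+1}\rangle$ (contractible) for $u\geq s$ and a point otherwise, while $\langle\Lambda\Sp^{k+1}_{[s,t)}\rangle(u)$ is $\langle\Lambda D^{k+1}\rangle$ for $u\geq t$, $\langle\Lambda S^{k+1}\rangle=K(\Q,k+1)$ for $s\leq u<t$, and a point otherwise — so this is precisely $K(\bI_{[s,t)},k+1)$, and the disk version is its path-space $PK(\bI_{[s,t)},k+1)$. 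Taking products over the interval summands and using that homotopy groups commute with products, $\langle\bigotimes\Lambda\Sp^{k+1}_{[s,t)}\rangle\simeq\prod K(\bI_{[s,t)},k+1)$ is a $K(\pi_k^\Q(\X),k+1)$ and its disk counterpart is the contractible $PK(\pi_k^\Q(\X),k+1)$, using the identification $H^k\C_{m_{k-1}}\cong Q^k(\bM)\cong\pi_k^\Q(\X)^\vee$ from the previous paragraph. The induced maps $\langle\bM_k\rangle\to\langle\bM_{k-1}\rangle$ and the classifying map $\prod\langle\tau\rangle$ then assemble the right-hand (pullback) square. Finally, to justify writing $\langle\bM_k\rangle$ in place of the Postnikov sections of $\X$ itself, I would invoke \Cref{persistent-quillen-pair} once more: the composite $\X\to\langle\apl(\X)\rangle\to\langle\bM\rangle$ is a pointwise $\Q$-localization (applying \Cref{spatialrealremark}-style reasoning pointwise, which is legitimate because the $\Q$-model structure on $p^*\sSet$ is the projective one), so $\pi_*^\Q\langle\bM\rangle\cong\pi_*^\Q\X$ and $\langle\bM_k\rangle$ is a model for the $k$-th rational Postnikov section of $\X$.

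The main obstacle I expect is the bookkeeping in the passage from CDGAs to spaces: one must check that the left Quillen functor $\langle-\rangle$ genuinely carries the \emph{persistent} Hirsch pushout of \eqref{main-cofiber-sequence-diagram} to the displayed pullback of copersistent spaces \emph{with the correct structure maps}, rather than merely pointwise. This requires that all the relevant colimits (the pushouts, the products over interval summands, the transfinite composite along the skeletal filtration) are computed pointwise in $p^*\sSet$ and are preserved by $\langle-\rangle$ — the first is automatic since $\langle-\rangle$ is a left adjoint into $(p^*\sSet)^{\op}$, and the second because (co)limits in functor categories are pointwise, but the identification of the attaching/classifying map as $\prod\langle\tau\rangle$ needs the explicit formulas of \Cref{interval-attaching-hirsch} together with the pointwise statement \eqref{classical-postnikov}. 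A secondary point to be careful about is the finite-type and simple-connectivity hypotheses, needed both for \Cref{structure-theorem} to apply and for the Quillen equivalence of \Cref{quillen_adjunction} to give that $\X\to\langle\bM\rangle$ is a rationalization; both are part of the hypothesis that $\X$ is tame, simply-connected and (implicitly) of finite type. Everything else is a routine translation of the classical argument, so the writeup would be short.
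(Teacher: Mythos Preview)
Your proposal is correct and follows essentially the same approach as the paper: the paper states the corollary as ``a direct application of the structure theorem'' together with the pointwise classical correspondence \eqref{classical-postnikov}, and the surrounding text invokes exactly the ingredients you list (left-adjointness of $\langle-\rangle$ to carry pushouts to pullbacks, the identification of $\langle\Lambda\Sp^k(\V)\rangle$ as a copersistent Eilenberg--MacLane space, and the weak $2$-equivalence $\X\to\langle\bM\rangle$). Your writeup is in fact more careful than the paper's own treatment, which gives no formal proof beyond these remarks.
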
 The construction on the right is the 'copersistent Postnikov tower' for (the rational homotopy type of) $\X$, and restricts to (\ref{classical-postnikov}) point-wise. The commutativity of products and spatial realization is a result of the (contravariant) left-adjointness. The decomposition of
$$ \langle \otimes \tau \rangle = \prod \Big( \langle \bM \rangle \xrightarrow{\langle \tau \rangle} K(\bI_{[s,t)}^\vee, k+1) \Big)$$
into products over the interval decomposition of $K(\pi_k^\Q(\X),k+1)$ reflects the basic correspondence:\\ 
$$\Big\{ \parbox{10em}{\centering attachments of\\ interval spheres} \Big\} \hspace{2em} \leftrightarrow \hspace{2em} \Big\{ \parbox{15em}{ \centering co-attachments of co-persistent\\ Eilenberg-Mac Lane intervals} \Big\} \vspace{1em}.$$ If all attaching maps in the minimal model are trivial, then
$$\X \simeq_2 \langle \bM \rangle \simeq \prod_k K( \pi_k^\Q(\X), k) \simeq \prod_\gamma K(\bI_{[s_\gamma,t_\gamma)},k_\gamma),$$ which we interpret as a `homotopical interval decomposition' of $\X$ at the level of copersistent spaces.

\subsection{Examples} The structural cell decomposition theorem \ref{structure-theorem} shows that the copersistent rational homotopy type of simply-connected $\X \in p^*\sSet$ is determined by 
\begin{enumerate}
    \item the intervals $\pi_*^\Q(\X)^\vee \cong \bigoplus \bI_{[s,t)}^k$, and
    \item their associated attaching maps in the minimal model $\bM$.
\end{enumerate} As the following examples show, the naturality of the coboundary map imposes algebraic constraints on the possible copersistent rational homotopy types associated to a given graded interval decomposition.\\

\subsubsection{Example I} For copersistent rational homotopy groups
\[\begin{tikzcd}[column sep=scriptsize,row sep=tiny]
	&& {\Q \cdot \beta_q} && 0 & 3 \\
	{\pi^\Q_*(\X)^\vee=} & {\Q \cdot \alpha_p} && 0 && 2, \\
	& p & q & r & s
	\arrow[from=1-3, to=1-5]
	\arrow[from=2-2, to=2-4]
\end{tikzcd}\] the $2$-minimal model is necessarily $\bM_2 = \Lambda \bI_{[p,r)}^2$. The two possible minimal models are determined by the coboundary of $\beta_q$, i.e., by the attaching maps
\[\begin{tikzcd}[column sep=scriptsize,row sep=tiny]
	{\Sp^{4}_{[q,s)}} && {\bM_2} & {\text{or}} & {\Sp^{4}_{[q,s)}} && {\bM_2}
	\arrow["{(\alpha_q^2,0)}", from=1-1, to=1-3]
	\arrow["0", from=1-5, to=1-7]
\end{tikzcd}\] which give rise to the following minimal models
$$
\bM = p\Lambda( \alpha_p, \beta_q  \mid d \beta_q = \alpha_q^2) \hspace{1em} \text{or} \hspace{5em} p\Lambda( \alpha_p, \beta_q)$$ where we have omitted the trivial coboundary relations $d\alpha_q = 0$ and $d \beta_p=0$. These possibilities respectively model the tame copersistent spaces
\[\begin{tikzcd}[column sep=scriptsize,row sep=tiny]
	&&&&&& {S^3_\Q} & {S^3_\Q} \\
	{\X \simeq_\Q} & {K(\Q,2)} & {S^2_\Q} & {S^3_\Q} & {\text{or}} && \times \\
	&&&&& {K(\Q,2)} & {K(\Q,2)} & {PK(\Q,2)}
	\arrow[from=1-8, to=1-7]
	\arrow[from=2-3, to=2-2]
	\arrow["{\text{hopf}}"', from=2-4, to=2-3]
	\arrow[from=3-7, to=3-6]
	\arrow[from=3-8, to=3-7]
\end{tikzcd}\]
up to copersistent rational equivalence. \\

\subsubsection{Example II} The commutativity of structure maps restricts the type of possible attaching maps. Making a minor adjustment to the previous example, let
\[\begin{tikzcd}[column sep=scriptsize,row sep=tiny]
	& {\Q \cdot \beta_p} && 0 && 3 \\
	{\pi^\Q_*(\X)^\vee= \bI^3_{[p,r)} \oplus \bI^2_{[q,s)} =} && {\Q \cdot \alpha_q} && 0 & 2 \\
	& p & q & r & s
	\arrow[from=1-2, to=1-4]
	\arrow[from=2-3, to=2-5]
\end{tikzcd}\] where the $2$-minimal model is now necessarily $\bM_2 = \Lambda \bI^2_{[q,s)}$. The only possible attaching map for persistent cell $\beta$ is trivial, implying $$\bM = p\Lambda( \alpha_q,\beta_p ) \hspace{4em} \text{and} \hspace{4em} \X \simeq_\Q K(\bI_{[q,s)},2) \times K(\bI_{[p,r)},3).$$\\

\subsubsection{Example III} In addition to the coboundary of generators, the presentation of a persistent minimal model has a second type of relation unique to the persistence setting: the mapping of the interval end-point into a non-trivial product of lower degree elements. For example, if 
\[\begin{tikzcd}[column sep=scriptsize,row sep=tiny]
	& {\Q \cdot \gamma_p} && 0 && 4 \\
	{\pi^\Q_*(\X)^\vee = \bI^4_{[p,r)} \oplus \bI^2_{[q,s)} =} && {\Q \cdot \alpha_q} && 0 & 2 \\
	& p & q & r & s
	\arrow[from=1-2, to=1-4]
	\arrow[from=2-3, to=2-5]
\end{tikzcd}\] then $\bM_2 = \Lambda \bI_{[q,s)}^2$. The full minimal model is determined by the attaching map
\[\begin{tikzcd}[column sep=scriptsize,row sep=tiny]
	{(1)} & {\Lambda \Sp^5_{[p,q)}} && {\bM_2} & {\text{or}} & {\Lambda \Sp^5_{[p,q)}} && {\bM_2} & {(2)}
	\arrow["{(0,\alpha_r^2)}", from=1-2, to=1-4]
	\arrow["0", from=1-6, to=1-8]
\end{tikzcd}\] of the persistent cell $\gamma$ with respective presentations
$$\bM = p\Lambda(\gamma_p, \alpha_q \mid \gamma_q = \alpha_q^2) \hspace{3em} \text{or} \hspace{3em} p\Lambda(\gamma_p, \alpha_q). \hspace{6em}$$ At the level of spaces these correspond to
\[\begin{tikzcd}[column sep=scriptsize,row sep=tiny]
	{K(\Q,4)} && {K(\Q,4)\times K(\Q,2)} && {K(\Q,2)}
	\arrow["\pi_{K(\Q,4)}"', from=1-3, to=1-1]
	\arrow["f"', from=1-5, to=1-3]
\end{tikzcd}\]
where $f$ is distinguished in the two cases by the induced map on the free polynomial cohomology rings
$$H(f) : \Q[\gamma,\alpha]  \to \Q[\alpha]; \hspace{5em} \alpha  \mapsto \alpha \hspace{5em} \gamma  \mapsto \begin{cases} \alpha^2 & (1) \\ 0 & (2). \end{cases} $$

\bibliography{misc/bibliography}
\bibliographystyle{alpha} 
\end{document}